\newtheorem{theorem}{Theorem}[section]
\newtheorem{lemma}[theorem]{Lemma}
\newtheorem{proposition}[theorem]{Proposition}
\newtheorem{corollary}[theorem]{Corollary}
\theoremstyle{definition}
\newtheorem{definition}[theorem]{Definition}
\theoremstyle{remark}
\numberwithin{equation}{section}
\begin{document}
\title[positive weakly $(q,r)$-dominated multilinear operators]{Tensorial
representations of positive weakly $(q,r)$-dominated multilinear operators}
\author{ Abdelaziz Belaada, Adel Bounabab, Athmane Ferradi and Khalil Saadi}
\date{}
\dedicatory{Laboratory of Functional Analysis and Geometry of Spaces,
Faculty of Mathematics and Computer Science, University of Mohamed
Boudiaf-M'sila, Po Box 166, Ichebilia, 28000, M'sila, Algeria.\\
Second address of the third author: Ecole Normale Sup\'{e}rieure de
Bousaada, Algeria\\
abdelaziz.belaada@univ-msila.dz\\
adel.bounabab@univ-msila.dz\\
ferradi.athmane@ens-bousaada.dz\\
khalil.saadi@univ-msila.dz}

\begin{abstract}
We introduce and study the class of positive weakly $(q,r)$-dominated
multilinear operators between Banach lattices. This notion extends classical
domination and summability concepts to the positive multilinear setting and
generates a new positive multi-ideal. A Pietsch domination theorem and a
polynomial version are established. Finally, we provide a tensorial
representation that yields an isometric identification with the dual of an
appropriate completed tensor product..
\end{abstract}

\maketitle

\setcounter{page}{1}


\let\thefootnote\relax\footnote{\textit{2020 Mathematics Subject
Classification.} Primary 47B65, 46G25, 47L20, 47B10, 46B42.
\par
{}\textit{Key words and phrases. }Banach lattice, Positive multilinear
ideal, Positive polynomial ideal, Positive weakly $(q,r)$-dominated
multilinear operators, Positive weakly $(q,r)$-dominated polynomial,
Tensional representation}

\section{Introduction and preliminaries}

The theory of absolutely summing operators, initiated by Pietsch in the
1960s, has played a central role in the development of operator ideals and
their applications to Banach space theory. Since then, several nonlinear
extensions have been studied, especially for multilinear operators,
homogeneous polynomials, and Lipschitz mappings, leading to a rich framework
that unifies summability, domination, and factorization properties. This
area of research has provided a unified approach to extending linear results
to nonlinear settings, with significant contributions from works of Pietsch 
\cite{PIETSCHoi,PieMult}, Cohen \cite{Coh}, Kwapie\'{n} \cite
{Kwa1,Kwa2}, and others. In recent years, increasing attention has
focused on the positive versions of these operators. In fact, positive
operator theory, which uses the lattice structure of Banach spaces, has
become a powerful tool for strengthening and extending classical results. In 
\cite{FBS}, the basic elements of positive linear and multilinear operator
ideals were established. Thereafter, in \cite{Adel}, positive polynomial
ideals were introduced as a natural extension of the linear and multilinear
cases. These ideals not only encompass the corresponding positive classes,
which fail to satisfy the conditions of classical ideals, but also provide a
unifying framework for their study. A positive multilinear ideal $\mathcal{M}%
^{+}$ (or polynomial ideal $\mathcal{P}^{+}$) is a class of multilinear
operators (or polynomials) between Banach lattices that is stable under
composition with positive linear operators. The theory of summability in the
positive setting not only produces sharper inequalities but also exposes
phenomena absent in the purely linear framework. The concept of absolutely $%
\left( p,q_{1},\ldots,q_{m},r\right) $-summing multilinear operators,
introduced by Achour \cite{Ach11}, provides a natural extension of the
classical absolutely $(p,q,r)$-summing operators of Pietsch \cite{PIETSCHoi}%
. When $\frac{1}{p}=\frac{1}{q}+\frac{1}{r},$ this class is referred to as
the $\left( q,r\right) $-dominated operators. In this paper, we introduce
the class of positive weakly $(q, r)$-dominated multilinear
operators, which combines lattice positivity with weak absolute summability.
This class is stable under composition with positive operators and fits
naturally into the framework of positive multi-ideals. We also establish the
Pietsch domination theorem in this setting, characterizing these operators
through vector measures on the positive balls of the dual space. This result
extends the classical Pietsch theorem to the positive setting, based on
Banach lattice spaces. Furthermore, we also examine the polynomial version,
showing that the structure extends to $m$-homogeneous polynomials, giving
rise to positive polynomial ideals. Finally, we establish a tensorial
representation for positive weakly $(q,r)$-dominated multilinear
operators. By introducing a suitable tensor norm, we obtain an isometric
identification of these operators with the dual of a completed tensor
product, thereby providing the natural tensorial framework for the theory.
The same approach applies to polynomials, where a tensor norm is constructed
using $\widehat{\otimes }_{s,\left\vert \pi \right\vert }^{m}E,$ the $m$%
-fold positive projective symmetric tensor product of $E$.

The paper is organized as follows. we recall standard notations used
throughout the paper. We present Banach lattice spaces and some of their key
properties. We provide the definition of the regular multilinear space\ $%
\mathcal{L}^{r}\left( E_{1},\ldots ,E_{m};F\right) $ and regular polynomials
space\ $\mathcal{P}^{r}\left( ^{m}E;F\right) $ which are needed for defining
positive weakly  $(q,r)$-summing operators. Section 2 introduces
the class of positive weakly $(q, r)$-dominated multilinear
operators and establishes their basic properties. We then naturally extend
this to define positive weakly $(q, r)$-dominated polynomials.
Both classes form positive ideals. Section 3 is devoted to the tensorial
representation, which leads to the desired isometric identification. In the
case where $\frac{1}{p}=\frac{1}{q}+\frac{1}{r},$ we show that the space $%
\mathcal{L}_{w,\left( q;r\right) }^{m+}\left( E,\ldots ,E_{m};F\right) $ of
positive weakly $(q,r)$-dominated multilinear operators can be identified
with the dual of 
\begin{equation*}
E_{1}\widehat{\otimes }_{\mu _{\left( q;r\right) }^{m+}}\cdots \widehat{%
\otimes }_{\mu _{\left( q;r\right) }^{m+}}E_{m}\widehat{\otimes }_{\mu
_{\left( q;r\right) }^{m+}}F^{\ast },
\end{equation*}%
where $\mu _{\left( q;r\right) }^{m+}$ is a tensor norm that we define
below. Similarly, we identify the space $\mathcal{P}_{w,\left( q;r\right)
}^{m+}\left( ^{m}E;F\right) $ of positive weakly $(q,r)$-dominated
polynomials with 
\begin{equation*}
(\left( \widehat{\otimes }_{s,\left\vert \pi \right\vert }^{m}E\right) 
\widehat{\otimes }_{\lambda _{\left( q;r\right) }^{m+}}F^{\ast })^{\ast },
\end{equation*}%
where we show that $\lambda _{\left( q;r\right) }^{m+}$ is $\mu _{\left(
q;r\right) }^{1+}$.

Throughout the paper, $E,F$ and $G$ denote Banach lattices and $X,Y$ denote
Banach spaces. Our spaces are over the field of real scalars $\mathbb{R}$.
By $B_{X}$ we denote the closed unit ball of $X$ and by $X^{\ast }$ its
topological dual. We use the symbol $\mathcal{L}(X;Y)$ for the space of all
bounded linear operators from $X$ into $Y$.\ For $1\leq p\leq \infty $, we
denote by $p^{\ast }$ its conjugate, i.e., $1/p+1/p^{\ast }=1$. Let $E$ be a
Banach lattice with norm $\left\Vert \cdot \right\Vert $ and order $\leq $.
We denote by $E^{+}$ the positive cone of $E$, i.e., $E^{+}=\{x\in
E:x\geqslant 0\}.$ Let $x\in E,$ its positive part is defined by $%
x^{+}:=\sup \{x,0\}\geq 0\ $and its negative part is defined by $x^{-}:=\sup
\{-x,0\}\geq 0.$ We have $x=x^{+}-x^{-},$ $\left\vert x\right\vert
=x^{+}+x^{-},$ and the inequalities $x\leq \left\vert x\right\vert
,x^{+}\leq \left\vert x\right\vert $ and $x^{-}\leq \left\vert x\right\vert .
$ The dual $E^{\ast }$ of a Banach lattice $E$ is a Banach lattice with the
natural order $x_{1}^{\ast }\leq x_{2}^{\ast }\Leftrightarrow \langle
x,x_{1}^{\ast }\rangle \leq \langle x,x_{2}^{\ast }\rangle ,\forall x\in
E^{+}.$ Since $E$ is a sublattice of $E^{\ast \ast },$ we have for $%
x_{1},x_{2}\in E$ $x_{1}\leq x_{2}\Longleftrightarrow \left\langle
x_{1},x^{\ast }\right\rangle \leq \left\langle x_{2},x^{\ast }\right\rangle
,\quad \forall x^{\ast }\in E^{\ast +}.$ We have $\left\vert \left\langle
x^{\ast },x\right\rangle \right\vert \leq \left\langle \left\vert x^{\ast
}\right\vert ,\left\vert x\right\vert \right\rangle ,$ for every $x^{\ast
}\in E^{\ast }$ and $x\in E.$ We denote by $\ell _{p}^{n}(X)$ the Banach
space of all absolutely $p$-summable sequences $(x_{i})_{i=1}^{n}\subset X$
with the norm $\Vert (x_{i})_{i=1}^{n}\Vert _{p}=(\sum_{i=1}^{n}\Vert
x_{i}\Vert ^{p})^{\frac{1}{p}},$ and by $\ell _{p,w}^{n}(X)$ the Banach
space of all weakly $p$-summable sequences $(x_{i})_{i=1}^{n}\subset X$ with
the norm, $\Vert (x_{i})_{i=1}^{n}\Vert _{p,w}=\sup_{x^{\ast }\in B_{X^{\ast
}}}(\sum_{i=1}^{n}|\langle x^{\ast },x_{i}\rangle |^{p})^{\frac{1}{p}}.$
Consider the case where $X$ is replaced by a Banach lattice $E$, and define 
\begin{equation*}
\ell _{p,|w|}^{n}(E)=\{(x_{i})_{i=1}^{n}\subset E:\left( |x_{i}|\right)
_{i=1}^{n}\in \ell _{p,w}^{n}(E)\}\text{ and }\Vert (x_{i})_{i=1}^{n}\Vert
_{p,|w|}=\Vert (|x_{i}|)_{i=1}^{n}\Vert _{p,w}.
\end{equation*}%
Let $B_{E^{\ast }}^{+}=\left\{ x^{\ast }\in B_{E^{\ast }}:x^{\ast }\geq
0\right\} =B_{E^{\ast }}\cap E^{\ast +}$. If $(x_{i})_{i=1}^{n}\subset E^{+}$
, we have that 
\begin{equation*}
\Vert (x_{i})_{i=1}^{n}\Vert _{p,|w|}=\Vert (x_{i})_{i=1}^{n}\Vert
_{p,w}=\sup_{x^{\ast }\in B_{E^{\ast }}^{+}}(\sum_{i=1}^{n}\langle x^{\ast
},x_{i}\rangle ^{p})^{\frac{1}{p}}.
\end{equation*}%
For every $\left( x_{i}\right) _{i=1}^{n}\subset E,$ it is straightforward
to show that 
\begin{equation}
\Vert (x_{i}^{+})_{i=1}^{n}\Vert _{p,|w|}\leq \Vert (x_{i})_{i=1}^{n}\Vert
_{p,|w|}\text{ and }\Vert (x_{i}^{-})_{i=1}^{n}\Vert _{p,|w|}\leq \Vert
(x_{i})_{i=1}^{n}\Vert _{p,|w|}.  \label{122}
\end{equation}%
Given $m\in \mathbb{N}^{\ast },$ we denote by $\mathcal{L}(E_{1},\ldots,E_{m};F)
$ the Banach space of all bounded multilinear operators from $E_{1}\times
\cdots \times E_{n}$ into $F$ endowed with the supremum norm $\left\Vert
T\right\Vert =\sup_{\substack{ \left\Vert x_{j}\right\Vert \leq 1 \\ \left(
1\leq j\leq m\right) }}\left\Vert T\left( x_{1},\ldots,x_{m}\right) \right\Vert
.$ An operator $T\in \mathcal{L}(E_{1},\ldots,E_{m};F)$ is called positive if $%
T\left( x_{1},\ldots,x_{m}\right) \geq 0$ for every $x_{j}\in E_{j}^{+}\left(
1\leq j\leq m\right) .$ We denote by $\mathcal{L}^{+}(E_{1},\ldots,E_{m};F)$
the set of all positive $m$-linear operators. For every $T\in \mathcal{L}%
^{+}(E_{1},\ldots,E_{m};F)$ and $x_{j}\in E_{j}\left( 1\leq j\leq m\right) ,$
we have%
\begin{equation*}
\left\vert T\left( x_{1},\ldots,x_{m}\right) \right\vert \leq T\left(
\left\vert x_{1}\right\vert ,\ldots,\left\vert x_{m}\right\vert \right) .
\end{equation*}%
An $m$-linear operator $T:E_{1}\times \cdots \times E_{m}\rightarrow F$ is a
lattice $m$-morphism if 
\begin{equation*}
\left\vert T\left( x_{1},\ldots,x_{m}\right) \right\vert =T\left( \left\vert
x_{1}\right\vert ,\ldots,\left\vert x_{m}\right\vert \right) 
\end{equation*}%
for all $x_{j}\in E_{j}$. An $m$-linear operator $T:E_{1}\times \cdots
\times E_{m}\rightarrow F$, is called regular if it can be written as $%
T=T_{1}-T_{2}$ with $T_{1},T_{2}\in \mathcal{L}^{+}(E_{1},\ldots,E_{m};F).$ We
denote by $\mathcal{L}^{r}(E_{1},\ldots,E_{m};F)$ the space of all regular $m$%
-linear operators from $E_{1}\times \cdots \times E_{n}$ into $F.$ In \cite%
{Bu}, if $F$ is Dedekind complete, then $\mathcal{L}^{r}(E_{1},\ldots,E_{m};F)$
is a Banach lattice with the norm $\left\Vert T\right\Vert _{\mathcal{L}%
^{r}}=\left\Vert \left\vert T\right\vert \right\Vert .$ In this case, $%
\mathcal{L}^{r+}(E_{1},\ldots,E_{m};F)=\mathcal{L}^{+}(E_{1},\ldots,E_{m};F).$ For
every $x_{j}^{\ast }\in E_{j}^{\ast }\left( 1\leq j\leq m\right) $, we have $%
x_{1}^{\ast }\otimes \cdots \otimes x_{m}^{\ast }\in \mathcal{L}^{r}\left(
E_{1},\ldots,E_{m}\right) ,$ and 
\begin{equation*}
\left\Vert x_{1}^{\ast }\otimes \cdots \otimes x_{m}^{\ast }\right\Vert _{%
\mathcal{L}^{r}}=\left\Vert x_{1}^{\ast }\right\Vert \cdots \left\Vert
x_{m}^{\ast }\right\Vert .
\end{equation*}%
Let $E_{1},\ldots,E_{m}$ be Banach lattices, and let $E_{1}\otimes \cdots
\otimes E_{m}$ denote their algebraic tensor product$.$ Fremlin \cite{Fre}
introduced the vector lattice tensor product $E_{1}\overline{\otimes }\cdots 
\overline{\otimes }E_{m},$ defined so that 
\begin{equation*}
\left\vert x_{1}\otimes \cdots \otimes x_{m}\right\vert =\left\vert
x_{1}\right\vert \otimes \cdots \otimes \left\vert x_{m}\right\vert 
\end{equation*}%
for all $x_{j}\in E_{j}\left( 1\leq j\leq m\right) .$ He also introduced the
positive projective tensor product $E_{1}\otimes _{\left\vert \pi
\right\vert }\cdots \otimes _{\left\vert \pi \right\vert }E_{m},$ where for
every $\theta \in E_{1}\overline{\otimes }\cdots \overline{\otimes }E_{m}$ 
\begin{equation*}
\left\Vert \theta \right\Vert _{\left\vert \pi \right\vert }=\left\{
\sum_{i=1}^{k}\prod\limits_{j=1}^{m}\left\Vert x_{i}^{j}\right\Vert
:x_{i}^{j}\in E_{j}^{+},k\in \mathbb{N}^{\ast },\left\vert \theta
\right\vert \leq \sum_{i=1}^{k}x_{i}^{1}\otimes \cdots \otimes
x_{i}^{m}\right\} ,
\end{equation*}%
Its completion $E_{1}\widehat{\otimes }_{\left\vert \pi \right\vert }\cdots 
\widehat{\otimes }_{\left\vert \pi \right\vert }E_{m}$ is again a Banach
lattice, and the canonical mapping $\otimes \left( x_{1},\ldots,x_{m}\right)
\longmapsto x_{1}\otimes \cdots \otimes x_{m}$ is a lattice $m$-morphism. If 
$F$ is Dedekind complete, according to \cite[Proposition 3.3]{Bu}, every
regular $m$-linear operator $T:E_{1}\times \cdots \times E_{m}\rightarrow F$
admits a unique linearization $T^{\otimes }\in \mathcal{L}^{r}(E_{1}\widehat{%
\otimes }_{\left\vert \pi \right\vert }\cdots \widehat{\otimes }_{\left\vert
\pi \right\vert }E_{m};F)$ such that $T^{\otimes }\left( x_{1}\otimes \cdots
\otimes x_{m}\right) =T\left( x_{1},\ldots,x_{m}\right) ,$ yielding an
isometric lattice isomorphism between $\mathcal{L}^{r}(E_{1}\widehat{\otimes 
}_{\left\vert \pi \right\vert }\cdots \widehat{\otimes }_{\left\vert \pi
\right\vert }E_{m};F)$ and $\mathcal{L}^{r}(E_{1},\ldots,E_{m};F)$. In
particular, when $F=\mathbb{R},$ we have the isometrically isomorphic and
lattice homomorphic identification 
\begin{equation*}
\mathcal{L}^{r}(E_{1},\ldots,E_{m})=\left( E_{1}\widehat{\otimes }_{\left\vert
\pi \right\vert }\cdots \widehat{\otimes }_{\left\vert \pi \right\vert
}E_{m}\right) ^{\ast }.
\end{equation*}%
Consequently,%
\begin{equation*}
B_{\left( E_{1}\widehat{\otimes }_{\left\vert \pi \right\vert }\cdots\widehat{%
\otimes }_{\left\vert \pi \right\vert }E_{m}\right) ^{\ast }}^{+}=B_{%
\mathcal{L}^{r}(E_{1},\ldots,E_{m})}^{+}=\left\{ T\in \mathcal{L}%
^{+}(E_{1},\ldots,E_{m}):\left\Vert T\right\Vert \leq 1\right\} .
\end{equation*}%
Moreover, for every $\varphi \in \mathcal{L}^{r}(E_{1},\ldots,E_{m})$ and $%
x_{1}\otimes \cdots\otimes x_{m}\in E_{1}\otimes \cdots\otimes E_{m},$ we have%
\begin{equation*}
\left\vert \left\langle \varphi ,x_{1}\otimes \cdots \otimes
x_{m}\right\rangle \right\vert \leq \left\langle \left\vert \varphi
\right\vert ,\left\vert x_{1}\right\vert \otimes \cdots \otimes \left\vert
x_{m}\right\vert \right\rangle ,
\end{equation*}%
For $\epsilon _{1},\ldots,\epsilon _{m}\in \left\{ +,-\right\} ,$ we have%
\begin{equation}
\sup_{\varphi \in B_{\mathcal{L}^{r}\left( E_{1},\ldots,E_{2}\right)
}^{+}}\left( \sum\limits_{i=1}^{n}\varphi (x_{i}^{1\epsilon
_{1}},\ldots,x_{i}^{n\epsilon _{m}})^{q}\right) ^{\frac{1}{q}}\leq
\sup_{\varphi \in B_{\mathcal{L}^{r}\left( E_{1},\ldots,E_{2}\right)
}^{+}}\left( \sum\limits_{i=1}^{n}\varphi (\left\vert x_{i}^{1}\right\vert
,\ldots,\left\vert x_{i}^{n}\right\vert )^{q}\right) ^{\frac{1}{q}}.
\label{124}
\end{equation}%
A map $P:X\rightarrow Y$ is an $m$-homogeneous polynomial if there exists a
unique symmetric $m$-linear operator $\widehat{P}:X\times \overset{(m)}{\cdots}%
\times X\rightarrow Y$ such that $P\left( x\right) =\widehat{P}\left( x,%
\overset{(m)}{\ldots},x\right) .$ We denote by $\mathcal{P}\left(
^{m}X;Y\right) $, the Banach space of all continuous $m$-homogeneous
polynomials from $X$ into $Y$ endowed with the norm%
\begin{equation*}
\left\Vert P\right\Vert =\sup_{\left\Vert x\right\Vert \leq 1}\left\Vert
P\left( x\right) \right\Vert =\inf \left\{ C:\left\Vert P\left( x\right)
\right\Vert \leq C\left\Vert x\right\Vert ^{m},x\in X\right\} .
\end{equation*}%
We denote by $\mathcal{P}_{f}(^{m}X;Y)$ the space of all $m$-homogeneous
polynomials of finite type, that is%
\begin{equation*}
\mathcal{P}_{f}(^{m}X;Y)=\left\{ \sum\limits_{i=1}^{k}\varphi
_{i}^{m}\left( x\right) y_{i}:k\in \mathbb{N},\varphi _{i}\in X^{\ast
},y_{i}\in Y,1\leq i\leq k\right\} .
\end{equation*}%
Let $E$ and $F$ be Banach lattices. An $m$-homogeneous polynomials $\mathcal{%
P}\left( ^{m}E;F\right) $ is called regular if its associated symmetric $m$%
-linear operator $\widehat{P}$ is regular. We denote by $\mathcal{P}%
^{r}(^{m}E;F)$ the space of all regular polynomials from $E$ into $F.$ It is
easy to see that $P$ is regular if and only if there exist $P_{1},P_{2}\in 
\mathcal{P}^{+}(^{m}E;F)$ such that $P=P_{1}-P_{2}.$ For a Banach lattice $E$%
, the positive projective symmetric tensor norm on $\overline{\otimes }%
_{s}^{m}E$ is defined by%
\begin{equation*}
\left\Vert u\right\Vert _{s,\left\vert \pi \right\vert }=\inf \left\{
\sum\limits_{i=1}^{k}\left\Vert x_{i}\right\Vert ^{m}:x_{i}\in E^{+},k\in 
\mathbb{N}^{\ast },\left\vert u\right\vert \leq
\sum\limits_{i=1}^{k}x_{i}\otimes \overset{\left( m\right) }{\cdots}\otimes
x_{i}\right\} 
\end{equation*}%
for each $u\in \overline{\otimes }_{s}^{m}E.$ We denote by $\widehat{\otimes 
}_{s,\left\vert \pi \right\vert }^{m}E$ the completion of $\overline{\otimes 
}_{s}^{m}E$ under the lattice norm $\left\Vert \cdot \right\Vert
_{s,\left\vert \pi \right\vert }.$ Then $\widehat{\otimes }_{s,\left\vert
\pi \right\vert }^{m}E$ is a Banach lattice, called the $m$-fold positive
projective symmetric tensor product of $E.$ Moreover, if $F$ is Dedekind
complete Banach lattice then for any regular $m$-homogeneous polynomial $%
P:E\rightarrow F$ there exists a unique regular linear operator $P^{\otimes
}:\widehat{\otimes }_{s,\left\vert \pi \right\vert }^{m}E\rightarrow F,$
called the linearization of $P,$ such that $P\left( x\right) =P^{\otimes
}\left( x\otimes \overset{\left( m\right) }{\cdots}\otimes x\right) $ for every 
$x\in E.$ Moreover, in \cite[Proposition 3.4]{Bu}, the correspondence $%
P\mapsto P^{\otimes }$ is isometrically isomorphic and lattice homomorphic
between the Banach lattices $\mathcal{P}^{r}(^{m}E;F)$ and $\mathcal{L}%
^{r}\left( \widehat{\otimes }_{s,\left\vert \pi \right\vert }^{m}E;F\right) $%
. If $F=\mathbb{R},$ we have 
\begin{equation*}
\mathcal{P}^{r}\left( ^{m}E\right) =\left( \widehat{\otimes }_{s,\left\vert
\pi \right\vert }^{m}E\right) ^{\ast }.
\end{equation*}

In \cite{FBS}, the authors give the following definition: A positive
multi-ideal is a subclass $\mathcal{M}^{+}$ of all continuous multilinear
operators between Banach lattices such that for all $m\in \mathbb{N}^{\ast }$
and Banach lattices $E_{1},\ldots ,E_{m}$ and $F$, the components 
\begin{equation*}
\mathcal{M}^{+}(E_{1},\ldots,E_{m};F):=\mathcal{L}(E_{1},\ldots,E_{m};F)\cap 
\mathcal{M}^{+}
\end{equation*}%
satisfy:\newline
$(i)$ $\mathcal{M}^{+}(E_{1},\ldots,E_{m};F)$ is a linear subspace of $\mathcal{%
L}(E_{1},\ldots,E_{m};F)$ which contains the $m$-linear mappings of finite rank.%
\newline
$(ii)$ The positive ideal property: If $T\in \mathcal{M}^{+}\left(
E_{1},\ldots ,E_{m};F\right) ,u_{j}\in \mathcal{L}^{+}\left(
G_{j};E_{j}\right) $ for $j=$ $1,\ldots ,m$ and $v\in \mathcal{L}^{+}(F;G)$,
then $v\circ T\circ \left( u_{1},\ldots ,u_{m}\right) $ is in $\mathcal{M}%
^{+}\left( G_{1},\ldots ,G_{m};G\right) $.\newline
If $\Vert \cdot \Vert _{\mathcal{M}^{+}}:\mathcal{M}^{+}\rightarrow \mathbb{R%
}^{+}$ satisfies:\newline
a) $\left( \mathcal{M}^{+}(E_{1},\ldots,E_{m};F),\Vert \cdot \Vert _{\mathcal{M}%
^{+}}\right) $ is a Banach space for all Banach lattices $E_{1},\ldots
,E_{m},$ $F$.\newline
b) The canonical $m$-linear form $T^{m}:\mathbb{R}^{m}\rightarrow \mathbb{R}$
given by $T^{m}\left( \lambda ^{1},\ldots ,\lambda ^{m}\right) =\lambda
^{1}\cdots \lambda ^{m}$ satisfies $\left\Vert T^{m}\right\Vert _{\mathcal{M}%
^{+}}=1$ for all $m$,\newline
c) $T\in \mathcal{M}^{+}\left( E_{1},\ldots ,E_{m};F\right) ,u_{j}\in 
\mathcal{L}^{+}\left( G_{j};E_{j}\right) $ for $j=1,\ldots ,m$ and $v\in 
\mathcal{L}^{+}(F;G)$ then 
\begin{equation*}
\left\Vert v\circ T\circ \left( u_{1},\ldots ,u_{m}\right) \right\Vert _{%
\mathcal{M}^{+}}\leq \Vert v\Vert \Vert T\Vert _{\mathcal{M}^{+}}\left\Vert
u_{1}\right\Vert \cdots \left\Vert u_{m}\right\Vert .
\end{equation*}%
The class $\left( \mathcal{M}^{+},\Vert \cdot \Vert _{\mathcal{M}}\right) $
is referred to as a positive Banach multi-ideal. In particular, when $m=1$,
we specifically refer to it as a positive Banach ideal. Replacing the class $%
\mathcal{M}^{+}$ with the polynomial class $\mathcal{P}^{+}$, and condition
c) with: $P\in \mathcal{P}^{+}\left( ^{m}E;F\right) ,u\in \mathcal{L}%
^{+}\left( G;E\right) $ and $v\in \mathcal{L}^{+}(F;G)$ together with 
\begin{equation*}
\left\Vert v\circ P\circ u\right\Vert _{\mathcal{P}^{+}}\leq \Vert v\Vert
\Vert P\Vert _{\mathcal{P}^{+}}\left\Vert u\right\Vert ^{m},
\end{equation*}%
we obtain the definition of positive polynomial ideals.

\section{Positive weakly $(q,r)$-dominated multilinear operators}

The notion of absolutely $(p,q,r)$-summing linear operators was first
introduced and studied by Pietsch \cite{PIETSCHoi}. Later, Achour in \cite%
{Ach11} extended this concept to the multilinear setting by defining
absolutely $(p,q_{1},\ldots ,q_{m};r)$-summing multilinear operators. A
positive linear version was subsequently introduced and analyzed by Chen et
al. \cite{CBD21}. Building on these ideas, a positive multilinear version
was proposed in \cite{FBS}, adapting Achour's definition to the ordered
context. When $\frac{1}{p}=\frac{1}{q}+\frac{1}{r},$ this notion is referred
to as $(q,r)$-dominated linear operators. In this section, we present
another version of positive multilinear operators. Let us start with de
definition in the positive linear case.

\begin{definition}
\cite[Definition 3.1]{CBD21} Consider $1\leq r,p,q\leq \infty $ such that $%
\frac{1}{p}=\frac{1}{q}+\frac{1}{r}$. Let $E$ and $F$ be Banach lattices. A
mapping $u\in \mathcal{L}\left( E;F\right) $ is said to be positive $(q,r)$%
-dominated if there is a constant $C>0$ such that for every $%
x_{1},\ldots,x_{n}\in E$ and $y_{1}^{\ast },\ldots ,y_{n}^{\ast }\in F^{\ast }$%
, the following inequality holds: 
\begin{equation}
\left\Vert \left( \left\langle u\left( x_{i}\right) ,y_{i}^{\ast
}\right\rangle \right) _{i=1}^{n}\right\Vert _{p}\leq C\left\Vert \left(
x_{i}\right) _{i=1}^{n}\right\Vert _{q,\left\vert w\right\vert }\left\Vert
\left( y_{i}^{\ast }\right) _{i=1}^{n}\right\Vert _{r,\left\vert
w\right\vert }.  \label{2.1}
\end{equation}%
The space consisting of all such mappings is denoted by $\Psi _{\left(
q,r\right) }\left( E;F\right) $. In this case, we define 
\begin{equation*}
\left\Vert u\right\Vert _{\Psi _{\left( q,r\right) }}=\inf \{C>0:C\text{ 
\textit{satisfies }}(\ref{2.1})\}.
\end{equation*}
\end{definition}

There are several ways to generalize this definition in the positive
multilinear setting. In \cite{FBS}, the authors introduced a positive
multilinear version by adapting each variable to its corresponding weakly $%
q_{j}$-summable norm. In our approach, we adjust the variables to a single
regular form, which allows us to employ weakly $q$-summable sequences in the
space $\mathcal{L}^{r}\left( E_{1},\ldots ,E_{m}\right) $. This idea was
used by Belacel et al. \cite{BBH23} to define the concept of positive weakly
Cohen $p$-nuclear operators. We will show that this new class forms a
concrete example of a positive multi-ideal, satisfies Pietsch's domination
theorem, and admits a representation via a tensor product equipped with a
norm tailored to this class.

\begin{definition}
\label{def1}Consider $1\leq p,q,r\leq \infty $ such that $\frac{1}{p}=\frac{1%
}{q}+\frac{1}{r}$. Let $E_{1},\ldots,E_{m}$ and $F$ be Banach lattices. A
mapping $T\in \mathcal{L}\left( E_{1},\ldots ,E_{m};F\right) $ is said to be
positive weakly $(q,r)$-dominated if there is a constant $C>0$ such that for
every $(x_{i}^{1},\ldots,x_{i}^{m})\in E_{1}^{+}\times \cdots\times E_{m}^{+}$ $%
\left( 1\leq i\leq n\right) $ and $y_{1}^{\ast },\ldots ,y_{n}^{\ast }\in
F^{\ast +}$, the following inequality holds: 
\begin{equation}
\left\Vert \left( \left\langle T\left( x_{i}^{1},\ldots ,x_{i}^{m}\right)
,y_{i}^{\ast }\right\rangle \right) _{i=1}^{n}\right\Vert _{p}\leq
C\sup_{\varphi \in B_{\mathcal{L}^{r}\left( E_{1},\ldots ,E_{m}\right)
}^{+}}(\sum\limits_{i=1}^{n}\varphi \left( x_{i}^{1},\ldots
,x_{i}^{m}\right) ^{q})^{\frac{1}{q}}\left\Vert \left( y_{i}^{\ast }\right)
_{i=1}^{n}\right\Vert _{r,\left\vert w\right\vert }.  \label{2.2}
\end{equation}%
The space consisting of all such mappings is denoted by $\mathcal{L}%
_{w,\left( q,r\right) }^{m+}\left( E_{1},\ldots ,E_{m};F\right) $. In this
case, we define 
\begin{equation*}
d_{w,\left( q;r\right) }^{m+}(T)=\inf \{C>0:C\text{ \textit{satisfies }}(\ref%
{2.2})\}.
\end{equation*}
\end{definition}

It is straightforward to verify that every finite type multilinear operator
is positive weakly $(q, r)$-dominated. Hence,%
\begin{equation}
\mathcal{L}_{f}(E_{1},\ldots ,E_{m};F)\subset \mathcal{L}_{w,\left(
q,r\right) }^{m+}\left( E_{1},\ldots ,E_{m};F\right) .  \label{ranf}
\end{equation}%
For $m=1$, we obtain the following coincidence $\mathcal{L}_{w,\left(
q,r\right) }^{1+}\left( E_{1};F\right) =\Psi _{\left( q,r\right) }\left(
E_{1};F\right) $. In the next result, we give the following equivalent
definition.

\begin{proposition}
\label{Propo1}Let $1\leq p,q,r\leq \infty $ with $\frac{1}{p}=\frac{1}{q}+%
\frac{1}{r}$ and $T\in \mathcal{L}\left( E_{1},\ldots ,E_{m};F\right) $. The
following properties are equivalent:\newline
1) The operator $T$ is positive weakly $(q,r)$-dominated.\newline
2) There is a constant $C>0$ such that for any $(x_{i}^{1},\ldots,x_{i}^{m})\in
E_{1}\times \cdots\times E_{m}$ $\left( 1\leq i\leq n\right) $ and $y_{1}^{\ast
},\ldots ,y_{n}^{\ast }\in F^{\ast }$, we have 
\begin{eqnarray}
&&\left\Vert \left( \left\langle T\left( x_{i}^{1},\ldots ,x_{i}^{m}\right)
,y_{i}^{\ast }\right\rangle \right) _{i=1}^{n}\right\Vert _{p}
\label{def2sec3} \\
&\leq &C\sup_{\varphi \in B_{\mathcal{L}^{r}\left( E_{1},\ldots
,E_{m}\right) }^{+}}\left( \sum\limits_{i=1}^{n}\varphi \left( \left\vert
x_{i}^{1}\right\vert ,\ldots ,\left\vert x_{i}^{m}\right\vert \right)
^{q}\right) ^{\frac{1}{q}}\left\Vert \left( y_{i}^{\ast }\right)
_{i=1}^{n}\right\Vert _{r,\left\vert w\right\vert }.  \notag
\end{eqnarray}%
In this case, we define 
\begin{equation*}
d_{w,\left( q;r\right) }^{m+}(T)=\inf \{C>0:C\text{ \textit{satisfies }}(\ref%
{def2sec3})\}.
\end{equation*}
\end{proposition}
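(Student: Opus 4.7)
The plan is to prove the two implications separately. The direction (2) $\Rightarrow$ (1) is immediate: specializing (\ref{def2sec3}) to inputs $x_{i}^{j}\in E_{j}^{+}$ and $y_{i}^{\ast}\in F^{\ast +}$ we have $|x_{i}^{j}|=x_{i}^{j}$, so the right-hand side collapses to that of (\ref{2.2}) and the very same constant $C$ works. So the bulk of the work is the converse.

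For (1) $\Rightarrow$ (2), the idea is to use the Banach-lattice decomposition and reduce the arbitrary case to the positive case. Given arbitrary $x_{i}^{j}\in E_{j}$ and $y_{i}^{\ast}\in F^{\ast}$, I would write $x_{i}^{j}=x_{i}^{j+}-x_{i}^{j-}$ and $y_{i}^{\ast}=y_{i}^{\ast +}-y_{i}^{\ast -}$, and use multilinearity of $T$ in its $m$ variables together with linearity of the pairing in $y_{i}^{\ast}$ to expand
\begin{equation*}
\langle T(x_{i}^{1},\ldots,x_{i}^{m}),y_{i}^{\ast}\rangle = \sum_{\epsilon\in\{+,-\}^{m}}\sum_{\delta\in\{+,-\}}\sigma(\epsilon,\delta)\,\langle T(x_{i}^{1\epsilon_{1}},\ldots,x_{i}^{m\epsilon_{m}}),y_{i}^{\ast\delta}\rangle,
\end{equation*}
where $\sigma(\epsilon,\delta)\in\{\pm 1\}$. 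Applying the triangle inequality in $\ell_{p}^{n}$ breaks the desired norm into $2^{m+1}$ pieces, and for each piece the inputs $x_{i}^{j\epsilon_{j}}$ and $y_{i}^{\ast\delta}$ are positive, so hypothesis (1) applies and gives
\begin{equation*}
\bigl\|\langle T(x_{i}^{1\epsilon_{1}},\ldots,x_{i}^{m\epsilon_{m}}),y_{i}^{\ast\delta}\rangle\bigr\|_{p}\leq C\sup_{\varphi\in B^{+}}\Bigl(\sum_{i}\varphi(x_{i}^{1\epsilon_{1}},\ldots,x_{i}^{m\epsilon_{m}})^{q}\Bigr)^{1/q}\bigl\|(y_{i}^{\ast\delta})\bigr\|_{r,|w|}.
\end{equation*}
I would then invoke inequality (\ref{124}) to dominate the inner supremum by $\sup_{\varphi\in B^{+}}(\sum_{i}\varphi(|x_{i}^{1}|,\ldots,|x_{i}^{m}|)^{q})^{1/q}$, and inequality (\ref{122}) to dominate $\|(y_{i}^{\ast\delta})\|_{r,|w|}$ by $\|(y_{i}^{\ast})\|_{r,|w|}$. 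Summing the $2^{m+1}$ pieces yields (\ref{def2sec3}).

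The main subtle point, and the only real obstacle, is the passage from equivalence of classes to the equality of the two infima as written in the statement. The decomposition argument naturally produces a constant of the form $2^{m+1}C$ rather than $C$, so strictly speaking it only shows that the two conditions determine the same multi-ideal with equivalent seminorms. To reconcile this with the author's identification of the two expressions for $d_{w,(q;r)}^{m+}(T)$, one either absorbs the universal factor into the convention (the same symbol is reused because both formulas characterise the same class), or one refines the bookkeeping by pairing the $2^{m+1}$ sign choices against the explicit signs $\sigma(\epsilon,\delta)$ before taking norms. In any case, once the decomposition is in place, conditions (1) and (2) are seen to be equivalent, which is the content of the proposition.
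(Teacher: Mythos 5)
Your argument is essentially the paper's own proof: the authors also prove $1)\Rightarrow 2)$ by splitting each $x_{i}^{j}$ and $y_{i}^{\ast}$ into positive and negative parts, expanding by multilinearity, applying the hypothesis to each of the $2^{m+1}$ resulting terms (they write out only $m=2$, obtaining the factor $8$), and then invoking inequalities (\ref{122}) and (\ref{124}). Even your caveat about the constant is shared with the paper, whose proof likewise yields $2^{m+1}d_{w,(q,r)}^{m+}(T)$ rather than the same constant, so the stated equality of the two infima is left unjustified there as well.
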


\begin{proof}
$2)\Rightarrow 1):$ Immediately applying Definition \ref{def1} for $%
(x_{i}^{1},\ldots,x_{i}^{m})\in E_{1}^{+}\times \cdots\times E_{m}^{+}$, $1\leq
i\leq n$ and $y_{1}^{\ast },\ldots,y_{n}^{\ast }\in F^{\ast +}$.\newline
$1)\Rightarrow 2):$ Suppose that $T$ is positive weakly $(q,r)$-dominated.
For convenience, we prove only the inequality for the case when $m=2$. Let $%
\left( x_{i}^{1},x_{i}^{2}\right) \in E_{1}\times E_{2},(1\leq i\leq n)$ $%
y_{1}^{\ast },\ldots ,y_{n}^{\ast }\in F^{\ast }$, then one has%
\begin{eqnarray*}
&&(\sum_{i=1}^{n}\left\vert \left\langle T(x_{i}^{1},x_{i}^{2}),y_{i}^{\ast
}\right\rangle \right\vert ^{p})^{\frac{1}{p}} \\
&=&(\sum_{i=1}^{n}\left\vert \left\langle T\left(
x_{i}^{1+}-x_{i}^{1-},x_{i}^{2+}-x_{i}^{2-}\right) ,y_{i}^{\ast
}\right\rangle \right\vert ^{p})^{\frac{1}{p}} \\
&\leq &(\sum_{i=1}^{n}\left\vert \left\langle T\left(
x_{i}^{1+},x_{i}^{2+}\right) ,y_{i}^{\ast }\right\rangle \right\vert ^{p})^{%
\frac{1}{p}}+(\sum_{i=1}^{n}\left\vert \left\langle T\left(
x_{i}^{1+},x_{i}^{2-}\right) ,y_{i}^{\ast }\right\rangle \right\vert ^{p})^{%
\frac{1}{p}}+ \\
&&(\sum_{i=1}^{n}\left\vert \left\langle T\left(
x_{i}^{1-},x_{i}^{2+}\right) ,y_{i}^{\ast }\right\rangle \right\vert ^{p})^{%
\frac{1}{p}}+(\sum_{i=1}^{n}\left\vert \left\langle T\left(
x_{i}^{1-},x_{i}^{2-}\right) ,y_{i}^{\ast }\right\rangle \right\vert ^{p})^{%
\frac{1}{p}},
\end{eqnarray*}%
which is less than or equal to%
\begin{eqnarray*}
&\leq &(\sum_{i=1}^{n}\left\vert \left\langle T\left(
x_{i}^{1+},x_{i}^{2+}\right) ,y_{i}^{\ast +}\right\rangle \right\vert ^{p})^{%
\frac{1}{p}}+(\sum_{i=1}^{n}\left\vert \left\langle T\left(
x_{i}^{1+},x_{i}^{2+}\right) ,y_{i}^{\ast -}\right\rangle \right\vert ^{p})^{%
\frac{1}{p}}+ \\
&&(\sum_{i=1}^{n}\left\vert \left\langle T\left(
x_{i}^{1+},x_{i}^{2-}\right) ,y_{i}^{\ast +}\right\rangle \right\vert ^{p})^{%
\frac{1}{p}}+(\sum_{i=1}^{n}\left\vert \left\langle T\left(
x_{i}^{1+},x_{i}^{2-}\right) ,y_{i}^{\ast -}\right\rangle \right\vert ^{p})^{%
\frac{1}{p}}+ \\
&&(\sum_{i=1}^{n}\left\vert \left\langle T\left(
x_{i}^{1-},x_{i}^{2+}\right) ,y_{i}^{\ast +}\right\rangle \right\vert ^{p})^{%
\frac{1}{p}}+(\sum_{i=1}^{n}\left\vert \left\langle T\left(
x_{i}^{1-},x_{i}^{2+}\right) ,y_{i}^{\ast -}\right\rangle \right\vert ^{p})^{%
\frac{1}{p}}+ \\
&&(\sum_{i=1}^{n}\left\vert \left\langle T\left(
x_{i}^{1-},x_{i}^{2+}\right) ,y_{i}^{\ast +}\right\rangle \right\vert ^{p})^{%
\frac{1}{p}}+(\sum_{i=1}^{n}\left\vert \left\langle T\left(
x_{i}^{1-},x_{i}^{2+}\right) ,y_{i}^{\ast -}\right\rangle \right\vert ^{p})^{%
\frac{1}{p}}+ \\
&&(\sum_{i=1}^{n}\left\vert \left\langle T\left(
x_{i}^{1-},x_{i}^{2-}\right) ,y_{i}^{\ast +}\right\rangle \right\vert ^{p})^{%
\frac{1}{p}}+(\sum_{i=1}^{n}\left\vert \left\langle T\left(
x_{i}^{1-},x_{i}^{2-}\right) ,y_{i}^{\ast -}\right\rangle \right\vert ^{p})^{%
\frac{1}{p}},
\end{eqnarray*}%
by using $(\ref{122})$ and $(\ref{124}),$ we obtain%
\begin{equation*}
(\sum_{i=1}^{n}\left\vert \left\langle T(x_{i}^{1},x_{i}^{2}),y_{i}^{\ast
}\right\rangle \right\vert ^{p})^{\frac{1}{p}}\leq 8d_{w,\left( q,r\right)
}^{2+}(T)\sup_{\varphi \in B_{\mathcal{L}^{r}\left( E_{1},E_{2}\right)
}^{+}}\left( \sum\limits_{i=1}^{n}\varphi (\left\vert x_{i}^{1}\right\vert
,\left\vert x_{i}^{2}\right\vert )^{q}\right) ^{\frac{1}{q}}\left\Vert
\left( y_{i}^{\ast }\right) _{i=1}^{n}\right\Vert _{r,\left\vert
w\right\vert }.
\end{equation*}
\end{proof}

\begin{proposition}
Let $E_{1},\ldots ,E_{m},F,G_{j}\left( 1\leq j\leq m\right) $ and $H$ are
Banach lattices. Let $T\in \mathcal{L}_{w,\left( q,r\right) }^{m+}\left(
E_{1},\ldots ,E_{m};F\right) ,$ $u_{j}\in \mathcal{L}^{+}\left(
G_{j};E_{j}\right) $ $\left( 1\leq j\leq m\right) $ and $v\in \mathcal{L}%
^{+}(F;H)$. Then%
\begin{equation*}
v\circ T\circ \left( u_{1},\ldots ,u_{m}\right) \in \mathcal{L}_{w,\left(
q;r\right) }^{m+}\left( G_{1},\ldots ,G_{m};H\right) .
\end{equation*}Moreover
\begin{equation*}
d_{w,\left( q,r\right) }^{m+}\left( v\circ T\circ \left( u_{1},\ldots
,u_{m}\right) \right) \leq d_{w,\left( q,r\right) }^{m+}(T)\left\Vert
u_{1}\right\Vert \cdots \left\Vert u_{m}\right\Vert \Vert v\Vert .
\end{equation*}

\end{proposition}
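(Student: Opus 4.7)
The plan is to verify the summability inequality for the composition directly, using the positivity of $u_1,\ldots,u_m,v$ to translate everything back to data to which the hypothesis on $T$ applies. Fix test data $(z_i^1,\ldots,z_i^m)\in G_1^+\times\cdots\times G_m^+$ and $h_1^*,\ldots,h_n^*\in H^{*+}$, which by Definition \ref{def1} are the only sequences I need to consider. Since each $u_j$ is positive, $u_j(z_i^j)\in E_j^+$, and since $v$ is positive, the adjoint $v^*$ is positive, hence $v^*(h_i^*)\in F^{*+}$. Writing
\[
\bigl\langle v\circ T\circ(u_1,\ldots,u_m)(z_i^1,\ldots,z_i^m),h_i^*\bigr\rangle
=\bigl\langle T(u_1(z_i^1),\ldots,u_m(z_i^m)),v^*(h_i^*)\bigr\rangle,
\]
I can feed the sequences $(u_1(z_i^1),\ldots,u_m(z_i^m))\subset E_1^+\times\cdots\times E_m^+$ and $(v^*(h_i^*))\subset F^{*+}$ into the inequality \eqref{2.2} for $T$.

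The next step is to estimate the two right-hand suprema in terms of the corresponding quantities on $G_1,\ldots,G_m,H$, picking up the factors $\|u_j\|$ and $\|v\|$. For the first supremum, given any $\varphi\in B_{\mathcal{L}^{r}(E_1,\ldots,E_m)}^{+}$, the composition $\psi:=\varphi\circ(u_1,\ldots,u_m)$ is a positive $m$-linear form on $G_1\times\cdots\times G_m$ with $\|\psi\|\leq\|u_1\|\cdots\|u_m\|$, so $\psi/(\|u_1\|\cdots\|u_m\|)\in B_{\mathcal{L}^{r}(G_1,\ldots,G_m)}^{+}$; taking the supremum over $\varphi$ then yields
\[
\sup_{\varphi}\Bigl(\sum_{i=1}^{n}\varphi(u_1(z_i^1),\ldots,u_m(z_i^m))^{q}\Bigr)^{1/q}
\leq\|u_1\|\cdots\|u_m\|\sup_{\widetilde\varphi\in B_{\mathcal{L}^{r}(G_1,\ldots,G_m)}^{+}}\Bigl(\sum_{i=1}^{n}\widetilde\varphi(z_i^1,\ldots,z_i^m)^{q}\Bigr)^{1/q}.
\]
For the weak $r$-norm, I use the positive-cone formula from the preliminaries to write $\|(v^*(h_i^*))_{i=1}^n\|_{r,|w|}$ as a supremum over $B_{F^{**}}^{+}$, transfer the adjoint onto $v^{**}$ (which is positive with $\|v^{**}\|=\|v\|$), and conclude $\|(v^*(h_i^*))_{i=1}^n\|_{r,|w|}\leq\|v\|\,\|(h_i^*)_{i=1}^n\|_{r,|w|}$.

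Combining the three estimates gives
\[
\bigl\|\bigl(\langle v\circ T\circ(u_1,\ldots,u_m)(z_i^1,\ldots,z_i^m),h_i^*\rangle\bigr)_{i=1}^{n}\bigr\|_{p}
\leq d_{w,(q,r)}^{m+}(T)\,\|u_1\|\cdots\|u_m\|\,\|v\|\,S_q\,R_r,
\]
where $S_q$ and $R_r$ are the two right-hand factors in \eqref{2.2} for the composed operator. This shows the composition lies in $\mathcal{L}_{w,(q;r)}^{m+}(G_1,\ldots,G_m;H)$ and yields the claimed bound on $d_{w,(q,r)}^{m+}$ by taking the infimum over admissible constants. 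The only mildly delicate step is the supremum estimate for the weak $r$-norm, where one has to be careful to pass through $F^{**}$ (not merely $B_{F}^{+}$) because the natural test functionals for the positive weak norm live in the bidual; once this is done via $v^{**}$, the rest is routine pushing of positivity through the composition.
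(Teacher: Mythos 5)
Your proposal is correct and follows essentially the same route as the paper: plug the positive data $(u_1(z_i^1),\ldots,u_m(z_i^m))$ and $(v^*(h_i^*))$ into the defining inequality for $T$, then extract the factors $\|u_1\|\cdots\|u_m\|$ and $\|v\|$ from the two right-hand suprema. In fact you supply more detail than the paper does (the normalization $\varphi\circ(u_1,\ldots,u_m)/(\|u_1\|\cdots\|u_m\|)\in B_{\mathcal{L}^{r}(G_1,\ldots,G_m)}^{+}$ and the passage through $v^{**}$ on $B_{F^{**}}^{+}$), both of which are valid and merely implicit in the paper's one-line estimate.
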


\begin{proof}
Let $(x_{i}^{1},\ldots,x_{i}^{m})\in G_{1}^{+}\times \cdots\times G_{m}^{+}$ $%
\left( 1\leq i\leq n\right) $ and $y_{1}^{\ast },\ldots ,y_{n}^{\ast }\in
G^{\ast +}.$ Since $T\in \mathcal{L}_{w,\left( q,r\right) }^{m+}\left(
E_{1},\ldots ,E_{m};F\right) ,$ $u_{j}\left( x_{i}^{j}\right) \geq 0$ and $%
v^{\ast }\left( y_{i}^{\ast }\right) \geq 0$ $\left( 1\leq j\leq m,1\leq
i\leq n\right) $ we have%
\begin{eqnarray*}
&&\left\Vert \left( \left\langle v\circ T\circ \left( u_{1},\ldots
,u_{m}\right) \left( x_{i}^{1},\ldots ,x_{i}^{m}\right) ,y_{i}^{\ast
}\right\rangle \right) _{i=1}^{n}\right\Vert _{p} \\
&=&\left\Vert \left( \left\langle T\left( u_{1}\left( x_{i}^{1}\right)
,\ldots ,u_{m}\left( x_{i}^{m}\right) \right) ,v^{\ast }\left( y_{i}^{\ast
}\right) \right\rangle \right) _{i=1}^{n}\right\Vert _{p} \\
&\leq &d_{w,\left( q,r\right) }^{m+}(T)\sup_{\varphi \in B_{\mathcal{L}%
^{r}\left( E_{1},\ldots ,E_{m}\right) }^{+}}(\sum\limits_{i=1}^{n}\varphi
\left( u_{1}\left( x_{i}^{1}\right) ,\ldots ,u_{m}\left( x_{i}^{m}\right)
\right) ^{q})^{\frac{1}{q}}\left\Vert \left( v^{\ast }\left( y_{i}^{\ast
}\right) \right) _{i=1}^{n}\right\Vert _{r,w} \\
&\leq &d_{w,\left( q,r\right) }^{m+}(T)\left\Vert u_{1}\right\Vert \cdots
\left\Vert u_{m}\right\Vert \Vert v\Vert \sup_{\varphi \in B_{\mathcal{L}%
^{r}\left( E_{1},\ldots ,E_{m}\right) }^{+}}(\sum\limits_{i=1}^{n}\varphi
\left( x_{i}^{1},\ldots ,x_{i}^{m}\right) ^{q})^{\frac{1}{q}}\left\Vert
\left( y_{i}^{\ast }\right) _{i=1}^{n}\right\Vert _{r,w}
\end{eqnarray*}%
thus $v\circ T\circ \left( u_{1},\ldots ,u_{m}\right) $ is in $\mathcal{L}%
_{w,\left( q,r\right) }^{m+}\left( G_{1},\ldots ,G_{m};G\right) $ and we have%
\begin{equation*}
d_{w,\left( q,r\right) }^{m+}\left( v\circ T\circ \left( u_{1},\ldots
,u_{m}\right) \right) \leq d_{w,\left( q,r\right) }^{m+}(T)\left\Vert
u_{1}\right\Vert \cdots \left\Vert u_{m}\right\Vert \Vert v\Vert .
\end{equation*}
\end{proof}

The pair $\left( \mathcal{L}_{w,\left( q,r\right) }^{m+},d_{d,\left(
q;r\right) }^{m+}\right) $ defines a positive Banach multilinear ideal. The
proof follows directly from the previous Proposition and the inclusion $(\ref%
{ranf})$, while the remaining details are straightforward. Now, we
characterize the positive weakly $(q,r)$-dominated multilinear operators by
the Pietsch domination theorem. For this purpose, we use the full general
Pietsch domination theorem given by Pellegrino et al. in \cite[Theorem 4.6]%
{PSS12} and \cite{ps}. For simplify, we denote by 
\begin{equation*}
\widehat{E}_{\left\vert \pi \right\vert }=E_{1}\widehat{\otimes }%
_{\left\vert \pi \right\vert }\cdots \widehat{\otimes }_{\left\vert \pi
\right\vert }E_{m}.
\end{equation*}

\begin{theorem}[Pietsch Domination Theorem]
\label{thdo1} Let $1\leq p,q,r\leq \infty $ with $\frac{1}{p}=\frac{1}{q}+%
\frac{1}{r}$. Let $E_{1},\ldots,E_{m}$ and $F$ be Banach lattices. The
following statements are equivalent.\newline
1) The operator $T\in \mathcal{L}\left( E_{1},\ldots ,E_{m};F\right) $ is
positive weakly $(q,r)$-dominated.\newline
2) There is a constant $C>0$ and Borel probability measures $\mu $ on $%
B_{\left( \widehat{E}_{\left\vert \pi \right\vert }\right) ^{\ast }}^{+}$
and $\eta $ on $B_{F^{\ast \ast }}^{+}$ such that%
\begin{equation}
\left\vert \langle T(x^{1},\ldots,x^{m}),y^{\ast }\rangle \right\vert \leq
C\left( \int_{B_{\left( \widehat{E}_{\left\vert \pi \right\vert }\right)
^{\ast }}^{+}}\varphi \left( \left\vert x_{i}^{1}\right\vert ,\ldots
,\left\vert x_{i}^{m}\right\vert \right) ^{q}d\mu \right) ^{\frac{1}{q}%
}(\int_{B_{F^{\ast \ast }}^{+}}\langle \left\vert y^{\ast }\right\vert
,y^{\ast \ast }\rangle ^{r}d\eta )^{\frac{1}{r}},  \label{123}
\end{equation}%
for all $(x^{1},\ldots,x^{m},y^{\ast })\in E_{1}\times \cdots\times E_{m}\times
F^{\ast }.$ Therefore, we have%
\begin{equation*}
d_{w,\left( q;r\right) }^{m+}(T)=\inf \{C>0:C\text{ \textit{satisfies
inequality} }(\ref{123})\}.
\end{equation*}%
3) There is a constant $C>0$ and Borel probability measures $\mu $ on $%
B_{\left( \widehat{E}_{\left\vert \pi \right\vert }\right) ^{\ast }}^{+}$
and $\eta $ on $B_{F^{\ast \ast }}^{+}$ such that 
\begin{equation}
\begin{array}{ll}
& |\langle T(x^{1},\ldots,x^{m}),y^{\ast }\rangle | \\ 
& \leq C\left( \int_{B_{\left( \widehat{E}_{\left\vert \pi \right\vert
}\right) ^{\ast }}^{+}}\varphi \left( x_{i}^{1},\ldots ,x_{i}^{m}\right)
^{q}d\mu \right) ^{\frac{1}{q}}\left( \int_{B_{F^{\ast \ast }}^{+}}\langle
y^{\ast },y^{\ast \ast }\rangle ^{r}d\eta \right) ^{\frac{1}{r}},%
\end{array}
\label{def2sec4}
\end{equation}%
for all $(x^{1},\ldots,x^{m},y^{\ast })\in E_{1}^{+}\times \cdots \times
E_{m}^{+}\times F^{\ast +}.$ Therefore, we have%
\begin{equation*}
d_{w,\left( q;r\right) }^{m+}(T)=\inf \{C>0:C\text{ \textit{satisfies }}(\ref%
{def2sec4})\}.
\end{equation*}
\end{theorem}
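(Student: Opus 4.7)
The plan is to prove the chain of equivalences in two steps: first (1)$\Leftrightarrow$(3) via the abstract Pietsch Domination Theorem of Pellegrino, Santos, and Seoane cited in the statement, and then (2)$\Leftrightarrow$(3) by a positive-part decomposition based on (\ref{122}) and (\ref{124}).

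For the core direction (1)$\Rightarrow$(3), I would instantiate the abstract Pellegrino-Santos-Seoane framework with the weak-star compact Hausdorff spaces $K_{1}:=B_{(\widehat{E}_{\left\vert \pi \right\vert })^{\ast }}^{+}$ and $K_{2}:=B_{F^{\ast \ast }}^{+}$, the input set $A:=E_{1}^{+}\times \cdots \times E_{m}^{+}\times F^{\ast +}$, the continuous maps $R_{1}(\varphi ,(x^{1},\ldots ,x^{m},y^{\ast })):=\varphi (x^{1},\ldots ,x^{m})$ and $R_{2}(y^{\ast \ast },(x^{1},\ldots ,x^{m},y^{\ast })):=\langle y^{\ast },y^{\ast \ast }\rangle$, and the size function $S(x^{1},\ldots ,x^{m},y^{\ast }):=\left\vert \langle T(x^{1},\ldots ,x^{m}),y^{\ast }\rangle \right\vert$. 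Using the isometric identification $\mathcal{L}^{r}(E_{1},\ldots ,E_{m})=(\widehat{E}_{\left\vert \pi \right\vert })^{\ast }$ and the sup formula for $\left\Vert \cdot \right\Vert _{r,\left\vert w\right\vert }$ on positive sequences recorded in the preliminaries, Definition~\ref{def1}'s inequality (\ref{2.2}) becomes exactly the abstract $(p;q,r)$-summing condition
\[
\Bigl( \sum_{i}S(a_{i})^{p}\Bigr) ^{1/p}\leq C\sup_{\varphi \in K_{1}}\Bigl( \sum_{i}R_{1}(\varphi ,a_{i})^{q}\Bigr) ^{1/q}\sup_{y^{\ast \ast }\in K_{2}}\Bigl( \sum_{i}R_{2}(y^{\ast \ast },a_{i})^{r}\Bigr) ^{1/r},
\]
so the abstract theorem delivers Borel probability measures $\mu$ on $K_{1}$ and $\eta$ on $K_{2}$ satisfying the pointwise bound (\ref{def2sec4}). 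The converse (3)$\Rightarrow$(1) is obtained by raising (\ref{def2sec4}) to the $p$-th power, summing over $i$, applying H\"{o}lder with the conjugate exponents $q/p$ and $r/p$ (permitted by $1/p=1/q+1/r$), then swapping sum and integral by Fubini and bounding each integrand by its supremum over the corresponding $K_{j}$.

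For (2)$\Rightarrow$(3) it suffices to restrict the inequality to positive arguments, where $\left\vert x^{j}\right\vert =x^{j}$ and $\left\vert y^{\ast }\right\vert =y^{\ast }$. For (3)$\Rightarrow$(2) I would decompose $x^{j}=x^{j+}-x^{j-}$ and $y^{\ast }=y^{\ast +}-y^{\ast -}$; by multilinearity $\langle T(x^{1},\ldots ,x^{m}),y^{\ast }\rangle$ expands as a signed sum of $2^{m+1}$ terms of the form $\pm \langle T(x^{1\epsilon _{1}},\ldots ,x^{m\epsilon _{m}}),y^{\ast \eta }\rangle$ with positive entries, each controlled by (\ref{def2sec4}); monotonicity of $\varphi \geq 0$ yields $\varphi (x^{1\epsilon _{1}},\ldots ,x^{m\epsilon _{m}})\leq \varphi (\left\vert x^{1}\right\vert ,\ldots ,\left\vert x^{m}\right\vert )$ and, for $y^{\ast \ast }\geq 0$, $\langle y^{\ast \eta },y^{\ast \ast }\rangle \leq \langle \left\vert y^{\ast }\right\vert ,y^{\ast \ast }\rangle$, assembling into (\ref{123}) up to a combinatorial factor which is absorbed into the constant. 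The main obstacle is pinning down the correct abstract Pellegrino-Santos-Seoane configuration: specifically, checking that our sup expressions over $B_{\mathcal{L}^{r}(E_{1},\ldots ,E_{m})}^{+}$ and $B_{F^{\ast \ast }}^{+}$ genuinely correspond to the $R_{1}$- and $R_{2}$-suprema of the abstract theorem, which hinges on the observation that for positive sequences $\left\Vert \cdot \right\Vert _{r,\left\vert w\right\vert }$ can be recovered by restricting the dual ball to its positive part, so that the measures produced by the abstract theorem can indeed be chosen to live on the positive weak-star compact sets $K_{1}$ and $K_{2}$.
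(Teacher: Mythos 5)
Your proposal is correct and rests on the same key tool as the paper, namely the full general Pietsch domination theorem of \cite[Theorem 4.6]{PSS12}, but you organize the equivalences differently. The paper instantiates the abstract theorem over all of $E_{1}\times \cdots \times E_{m}\times F^{\ast }$ with $R_{1}(\varphi ,\cdot )=\varphi (\vert x^{1}\vert ,\ldots ,\vert x^{m}\vert )$ and $R_{2}(y^{\ast \ast },\cdot )=\vert \lambda _{2}\vert \langle \vert y^{\ast }\vert ,y^{\ast \ast }\rangle$, so that the abstract summing condition is the absolute-value reformulation of Proposition \ref{Propo1} and the theorem yields $1)\Leftrightarrow 2)$ directly, with $2)\Rightarrow 3)$ and $3)\Rightarrow 1)$ left as immediate; you instead run the abstract theorem on the positive cones to get $1)\Leftrightarrow 3)$ (together with an explicit H\"older/probability-measure argument for $3)\Rightarrow 1)$), and then pass from $3)$ to $2)$ by the decomposition $x^{j}=x^{j+}-x^{j-}$, $y^{\ast }=y^{\ast +}-y^{\ast -}$ and monotonicity of positive $\varphi$ and positive $y^{\ast \ast }$. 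Both placements of the modulus decomposition are legitimate: the paper hides it inside Proposition \ref{Propo1}, while you make it explicit in $3)\Rightarrow 2)$, and your route has the small advantage that the equality $d_{w,\left( q;r\right) }^{m+}(T)=\inf \{C:\;C\text{ satisfies }(\ref{def2sec4})\}$ comes out with exact constants. The one caveat is quantitative: your $3)\Rightarrow 2)$ produces the factor $2^{m+1}$, so the claimed equality of $d_{w,\left( q;r\right) }^{m+}(T)$ with the best constant in (\ref{123}) follows from your chain only up to that factor; note, though, that the paper's route carries the same looseness, since its identification of $d_{w,\left( q;r\right) }^{m+}(T)$ with the best constant in the absolute-value summing inequality relies on Proposition \ref{Propo1}, whose proof also introduces $2^{m+1}$. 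Finally, when invoking the abstract theorem you should retain the dummy scalar parameters $\lambda _{1},\lambda _{2}$ in $S$ and $R_{2}$ (as the paper does) so that the structural hypotheses of \cite[Theorem 4.6]{PSS12} are literally met; with that adjustment your verification of weak-star continuity on the compact sets $B_{\left( \widehat{E}_{\left\vert \pi \right\vert }\right) ^{\ast }}^{+}$ and $B_{F^{\ast \ast }}^{+}$, and the identification of the suprema with those in Definition \ref{def1} via $\mathcal{L}^{r}(E_{1},\ldots ,E_{m})=\left( \widehat{E}_{\left\vert \pi \right\vert }\right) ^{\ast }$ and the positive-cone formula for $\Vert \cdot \Vert _{r,\left\vert w\right\vert }$, is sound.
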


\begin{proof}
$1)\Longleftrightarrow 2):$ We will choose the parameters as specified in 
\cite[Theorem 4.6]{PSS12} 
\begin{equation*}
\left\{ 
\begin{array}{l}
S:\mathcal{L}\left( E_{1},\ldots ,E_{m};F\right) \times \left( E_{1}\times
\ldots \times E_{m}\times F^{\ast }\right) \times \mathbb{R\times R}%
\rightarrow \mathbb{R}^{+}: \\ 
S\left( T,\left( x^{1},\ldots,x^{m},y^{\ast }\right) ,\lambda _{1},\lambda
_{2}\right) =\left\vert \lambda _{2}\right\vert |\langle
T(x^{1},\ldots,x^{m}),y^{\ast }\rangle | \\ 
R_{1}:B_{\left( \widehat{E}_{\left\vert \pi \right\vert }\right) ^{\ast
}}^{+}\times \left( E_{1}\times \ldots \times E_{m}\times F^{\ast }\right)
\times \mathbb{R}\rightarrow \mathbb{R}^{+}: \\ 
R_{1}(\varphi ,\left( x^{1},\ldots,x^{m},y^{\ast }\right) ,\lambda
_{1})=\varphi \left( \left\vert x^{1}\right\vert ,\ldots ,\left\vert
x^{m}\right\vert \right)  \\ 
R_{2}:B_{F^{\ast \ast }}^{+}\times \left( E_{1}\times \cdots \times
E_{m}\times F^{\ast }\right) \times \mathbb{R}\rightarrow \mathbb{R}^{+}: \\ 
R_{2}(y^{\ast \ast },\left( x^{1},\ldots,x^{m},y^{\ast }\right) ,\lambda
_{2})=\left\vert \lambda _{2}\right\vert \langle |y^{\ast }|,y^{\ast \ast
}\rangle .%
\end{array}%
\right. 
\end{equation*}%
These maps satisfy conditions $\left( 1\right) $ and $\left( 2\right) $ from 
\cite[p. 1255]{PSS12}. We can easily conclude that $T:E_{1}\times \cdots
\times E_{m}\rightarrow F$ is positive weakly $(q,r)$-dominated if, and only
if, 
\begin{eqnarray*}
&&(\sum\limits_{i=1}^{n}S\left( T,\left(
x_{i}^{1},\ldots,x_{i}^{m},y_{i}^{\ast }\right) ,\lambda _{i,1},\lambda
_{i,2}\right) ^{p})^{\frac{1}{p}} \\
&\leq &C\sup_{\varphi \in B_{\left( \widehat{E}_{\left\vert \pi \right\vert
}\right) ^{\ast }}^{+}}(\sum\limits_{i=1}^{n}R_{1}(x^{\ast },\left(
x_{i}^{1},\ldots,x_{i}^{m},y_{i}^{\ast }\right) ,\lambda _{i,1})^{q})^{\frac{1}{%
q}} \\
&&\times \sup_{y^{\ast \ast }\in B_{F^{\ast \ast
}}^{+}}(\sum\limits_{i=1}^{n}R_{2}(y^{\ast \ast },\left(
x_{i}^{1},\ldots,x_{i}^{m},y_{i}^{\ast }\right) ,\lambda _{i,2})^{r})^{\frac{1}{%
r}},
\end{eqnarray*}%
i.e., $T$ is $R_{1},R_{2}$-$S$-abstract $(q,r)$-summing. As outlined in \cite%
[Theorem 4.6]{PSS12}, this implies that $T$ is $R_{1},R_{2}$-$S$-abstract $%
(q,r)$-summing if, and only if, there exists a positive constant $C$ and
probability measures $\mu $ on $B_{\left( \widehat{E}_{\left\vert \pi
\right\vert }\right) ^{\ast }}^{+}$ and $\eta $ on $B_{F^{\ast \ast }}^{+}$,
such that 
\begin{eqnarray*}
&&S\left( T,\left( x^{1},\ldots,x^{m},y^{\ast }\right) ,\lambda _{1},\lambda
_{2}\right)  \\
&\leq &C(\int_{B_{E^{\ast }}^{+}}R_{1}(x^{\ast },\left(
x^{1},\ldots,x^{m},y^{\ast }\right) ,\lambda _{1})^{q}d\mu )^{\frac{1}{q}%
}(\int_{B_{F^{\ast \ast }}^{+}}R_{2}(y^{\ast \ast },\left(
x^{1},\ldots,x^{m},y^{\ast }\right) ,\lambda _{2})^{r}d\eta )^{\frac{1}{r}}.
\end{eqnarray*}%
Consequently 
\begin{equation*}
\left\vert \langle T(x^{1},\ldots,x^{m}),y^{\ast }\rangle \right\vert \leq
C\left( \int_{B_{\left( \widehat{E}_{\left\vert \pi \right\vert }\right)
^{\ast }}^{+}}\varphi \left( \left\vert x^{1}\right\vert ,\ldots ,\left\vert
x^{m}\right\vert \right) ^{q}d\mu \right) ^{\frac{1}{q}}(\int_{B_{F^{\ast
\ast }}^{+}}\langle \left\vert y^{\ast }\right\vert ,y^{\ast \ast }\rangle
^{r}d\eta )^{\frac{1}{r}}.
\end{equation*}%
The implications $2)\Longrightarrow 3)$ and $3)\Longrightarrow 1$ are
immediate.
\end{proof}

\begin{proposition}
Let $T\in \mathcal{L}\left( E_{1},\ldots ,E_{m};F\right) $ and $1\leq
p,q,r\leq \infty $. Consider the following statements:\newline
1) $T^{\otimes }:E_{1}\widehat{\otimes }_{\left\vert \pi \right\vert }\cdots 
\widehat{\otimes }_{\left\vert \pi \right\vert }E_{m}\rightarrow F$ is
positive weakly $(q, r)$-dominated;\newline
2) There exist a Banach space $G$, a positive (Dimant) strongly $p$-summing $%
m$-linear operator $S:E_{1}\times \cdots \times E_{m}\rightarrow G$ and a
Cohen positive strongly $p$-summing linear operator $u:G\rightarrow F$ such
that $T=u\circ S$;\newline
3) $T$ is positive weakly $\left( q,r\right) $-dominated.\newline
Then, the statement 1) implies 2), which implies 3).
\end{proposition}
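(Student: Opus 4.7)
The plan is to establish $1) \Rightarrow 2) \Rightarrow 3)$ by exploiting the Pietsch domination theorem for the linearization $T^{\otimes}$ and then performing a Pietsch-style factorization through a subspace of an $L_{q}$-space built from the dominating measure.

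For $1) \Rightarrow 2)$, I would start by observing that by hypothesis $T^{\otimes}: \widehat{E}_{\left\vert \pi \right\vert} \to F$ is a positive weakly $(q,r)$-dominated linear operator, so the $m=1$ case of Theorem \ref{thdo1} (applied through the isometric identification $\mathcal{L}^{r}(E_{1},\ldots,E_{m}) = (\widehat{E}_{\left\vert \pi \right\vert})^{\ast}$ from Section 1) yields Borel probability measures $\mu$ on $B_{\mathcal{L}^{r}(E_{1},\ldots,E_{m})}^{+}$ and $\eta$ on $B_{F^{\ast\ast}}^{+}$ and a constant $C$ with
\begin{equation*}
\left\vert \langle T^{\otimes}(z), y^{\ast} \rangle \right\vert \leq C \Bigl( \int \varphi(\left\vert z\right\vert)^{q} \, d\mu \Bigr)^{1/q} \Bigl( \int \langle \left\vert y^{\ast}\right\vert, y^{\ast\ast} \rangle^{r} \, d\eta \Bigr)^{1/r}
\end{equation*}
for every $z \in \widehat{E}_{\left\vert \pi \right\vert}$ and $y^{\ast} \in F^{\ast}$. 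I would then define $j: \widehat{E}_{\left\vert \pi \right\vert} \to L_{q}(\mu)$ by $j(z)(\varphi) = \varphi(\left\vert z\right\vert)$, take $G$ to be the closure of $j(\widehat{E}_{\left\vert \pi \right\vert})$ in $L_{q}(\mu)$, use the above inequality to factor $T^{\otimes} = u \circ j$ with a bounded linear $u: G \to F$, and read off from the $\eta$-integral that $u$ is Cohen positive strongly $p$-summing. Setting $S := j \circ \otimes$ with $\otimes: E_{1}\times \cdots \times E_{m} \to \widehat{E}_{\left\vert \pi \right\vert}$ the canonical positive lattice $m$-morphism, the $\mu$-side of the estimate transported through $\mathcal{L}^{r}(E_{1},\ldots,E_{m}) = (\widehat{E}_{\left\vert \pi \right\vert})^{\ast}$ delivers the positive (Dimant) strongly $p$-summing property of $S$, producing the required factorization $T = u \circ S$.

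For $2) \Rightarrow 3)$, I would plug $T = u \circ S$ directly into the defining inequality $(\ref{2.2})$ of Definition \ref{def1}. Given positive tuples $(x_{i}^{1},\ldots,x_{i}^{m})$ and $y_{i}^{\ast}\geq 0$, the Cohen positive strongly $p$-summing property of $u$ applied to the $G$-valued sequence $\bigl(S(x_{i}^{1},\ldots,x_{i}^{m})\bigr)_{i}$ and to $(y_{i}^{\ast})_{i}$ bounds $\|(\langle T(x_{i}^{1},\ldots,x_{i}^{m}), y_{i}^{\ast}\rangle)_{i}\|_{p}$ by a constant times $\|(S(x_{i}^{1},\ldots,x_{i}^{m}))_{i}\|_{\ell_{p}^{n}(G)}$ times $\|(y_{i}^{\ast})\|_{r,\left\vert w\right\vert}$; the Dimant strongly $p$-summing property of $S$ then controls the first factor by $\sup_{\varphi \in B_{\mathcal{L}^{r}(E_{1},\ldots,E_{m})}^{+}}\bigl(\sum \varphi(x_{i}^{1},\ldots,x_{i}^{m})^{q}\bigr)^{1/q}$, which is exactly the quantity that appears on the right-hand side of $(\ref{2.2})$. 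Since $\frac{1}{p} = \frac{1}{q} + \frac{1}{r}$, the exponents combine correctly and $T$ is positive weakly $(q,r)$-dominated.

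The main obstacle lies in the first implication, namely in simultaneously certifying both summing properties for the two factors. Specifically, one must verify that the $\mu$-side of the Pietsch estimate, read on $j(\widehat{E}_{\left\vert \pi \right\vert}) \subseteq L_{q}(\mu)$, induces a genuine Dimant strongly $p$-summing \emph{multilinear} bound on $S = j \circ \otimes$ rather than merely a linear summing bound on $j$. This hinges on the isometric identification $\mathcal{L}^{r}(E_{1},\ldots,E_{m}) = (\widehat{E}_{\left\vert \pi \right\vert})^{\ast}$ and on the fact that $\otimes$ sends $E_{1}^{+}\times \cdots \times E_{m}^{+}$ into the positive cone of $\widehat{E}_{\left\vert \pi \right\vert}$, so that positivity is preserved at each step and the resulting factors $S$ and $u$ actually live in the positive multi-ideal framework from Section 1.
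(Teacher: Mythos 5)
Your proposal diverges from the paper in the first implication and, as written, contains genuine gaps there and in the second one. For $1)\Rightarrow 2)$ the paper does not reconstruct the factorization by hand: it invokes the positive Kwapie\'n-type factorization theorem \cite[Theorem 3.7]{CBD21} to write $T^{\otimes}=u\circ v$ with $v$ positive summing and $u$ Cohen positive strongly summing, sets $S=v\circ \otimes$, and then transfers the summing property from $v=S^{\otimes}$ to $S$ via \cite[Corollary 1]{BBH2020}. Your substitute construction does not work as stated: the map $j(z)(\varphi)=\varphi(\left\vert z\right\vert)$ is \emph{not linear} in $z$ (the modulus is not additive), so the closure of $j(\widehat{E}_{\left\vert \pi\right\vert})$ need not be a subspace and the prescription $u(j(z)):=T^{\otimes}(z)$ does not define a bounded linear operator; moreover, even with a correct (linear) Pietsch embedding, a single domination inequality yields one summing factor together with a merely bounded second factor, and upgrading \emph{both} factors simultaneously (one summing, the other Cohen positive strongly summing) is precisely the content of the cited factorization theorem. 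You identify this as ``the main obstacle'' but do not resolve it, so the implication is not proved.

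For $2)\Rightarrow 3)$ the inequalities you attribute to the two factors are not their defining inequalities: the Cohen positive strongly $p$-summing property bounds a sum of pairings against $\Vert (z_i)\Vert_{\ell_p^n(G)}$ and the $\left\vert w\right\vert$-norm with the conjugate index (not an $r$-weak norm paired with an $\ell_p$-norm on the left), and the Dimant strongly $p$-summing property has the same exponent on both sides, so the sequence-level chaining with exponents $p$, $q$, $r$ does not follow as claimed. The paper's argument is pointwise: it composes the Pietsch dominations of $u$ and of $S$ to obtain, for positive $(x^{1},\ldots,x^{m},y^{\ast})$, exactly the two-measure inequality (\ref{def2sec4}), and then concludes by the implication $3)\Rightarrow 1)$ of the multilinear Pietsch domination theorem. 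Your argument can be repaired along these lines (use the pointwise dominations of $u$ and $S$, then H\"older with $\frac1p=\frac1q+\frac1r$ at the level of the measures), but as written both steps rest on incorrectly stated properties or an unproved factorization.
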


\begin{proof}
$1)\Rightarrow 2)$ Since $T^{\otimes }$ is positive $\left( q,r\right) $%
-dominated, by \cite[Theorem 3.7]{CBD21} there exist a Banach space $G$, an
positive $q$-summing linear operator $v:E_{1}\widehat{\otimes }_{\left\vert
\pi \right\vert }\cdots \widehat{\otimes }_{\left\vert \pi \right\vert
}E_{m}\rightarrow G$ and a Cohen positive strongly $q^{\ast }$-summing
linear operator $u:G\rightarrow F$ such that $T=u\circ v$. Let $S=v\circ
\otimes $.Then $T=u\circ v\circ \otimes =u\circ S$ and the following diagram 
\begin{equation*}
\begin{array}{ccc}
E_{1}\times \cdots \times E_{m} & \overset{T}{\longrightarrow } & F \\ 
\downarrow \otimes  & \searrow S & \uparrow u \\ 
\widehat{E}_{\left\vert \pi \right\vert } & \overset{v}{\longrightarrow } & G%
\end{array}%
\end{equation*}%
commutes. Since $S^{\otimes }=v$, by \cite[Corollary 1]{BBH2020}, it follows
that $S$ is positive Dimant strongly $p$-summing.

$2)\Longrightarrow 3):$ There exist a Borel probability measures $\mu $ on $%
B_{\left( \widehat{E}_{\left\vert \pi \right\vert }\right) ^{\ast }}^{+}$
and $\eta $ on $B_{F^{\ast \ast }}^{+}$ such that for every $%
(x^{1},\ldots,x^{m})\in E_{1}^{+}\times \cdots \times E_{m}^{+}$ and $y^{\ast
}\in F^{\ast +}$, we have 
\begin{eqnarray*}
&&\left\vert \langle T(x^{1},\ldots,x^{m}),y^{\ast }\rangle \right\vert  \\
&=&\left\langle u\circ S\left( x^{1},\ldots,x^{m}\right) ,y^{\ast
}\right\rangle  \\
&\leq &d_{p}^{+}\left( u\right) \left\Vert S\left( x^{1},\ldots,x^{m}\right)
\right\Vert \left( \int_{B_{F^{\ast \ast }}^{+}}\langle y^{\ast },y^{\ast
\ast }\rangle ^{r}d\eta \right) ^{\frac{1}{r}} \\
&\leq &d_{p}^{+}\left( u\right) d_{s,p}^{m+}\left( S\right) \left(
\int_{B_{\left( \widehat{E}_{\left\vert \pi \right\vert }\right) ^{\ast
}}^{+}}\varphi \left( x^{1},\ldots,x^{m}\right) ^{q}d\mu \right) ^{\frac{1}{q}%
}\left( \int_{B_{F^{\ast \ast }}^{+}}\langle y^{\ast },y^{\ast \ast }\rangle
^{r}d\eta \right) ^{\frac{1}{r}}.
\end{eqnarray*}
\end{proof}

In the sequel, we develop and analyze the corresponding positive polynomial
version. This approach allows us to establish that the class of positive
weakly $(q,r)$-dominated polynomials forms a positive polynomial ideal,
stable under composition with bounded positive operators. Moreover, we
establish a Pietsch domination theorem in the polynomial setting, showing
that the domination inequalities extend naturally from the multilinear case.

\begin{definition}
\label{definition7}Let $m\in \mathbb{N}^{\ast }.$ Let $1\leq p,q,r\leq
\infty $ with $\frac{1}{p}=\frac{1}{q}+\frac{1}{r}$. Let $E$ and $F$ be
Banach lattices. A polynomial $P\in \mathcal{P}\left( ^{m}E;F\right) $ is
called positive weakly $(q, r)$-dominated if there exists a constant $C>0$ such that
for any $(x_{i})_{i=1}^{n}\subset E^{+}$ and $\left( y_{i}^{\ast }\right)
_{i=1}^{n}\subset F^{\ast +}$, the following inequality holds: 
\begin{equation}
\left\Vert \left( \left\langle P\left( x_{i}\right) ,y_{i}^{\ast
}\right\rangle \right) _{i=1}^{n}\right\Vert _{p}\leq C\sup_{\phi \in B_{%
\mathcal{P}^{r}\left( ^{m}E\right) }^{+}}\left( \sum\limits_{i=1}^{n}\phi
\left( x_{i}\right) ^{q}\right) ^{\frac{1}{q}}\left\Vert \left( y_{i}^{\ast
}\right) _{i=1}^{n}\right\Vert _{r,\left\vert w\right\vert }.
\label{def1sec3}
\end{equation}%
The space of all such polynomials is denoted by $\mathcal{P}_{w,\left(
q,r\right) }^{+}\left( ^{m}E;F\right) $. Its norm is given by 
\begin{equation*}
d_{w,\left( q,r\right) }^{m+}(P)=\inf \{C>0:C\text{ \textit{satisfies }}(\ref%
{def1sec3})\}.
\end{equation*}
\end{definition}

An equivalent formulation of $(\ref{def1sec3})$ is 
\begin{equation*}
\left\Vert \left( \left\langle P\left( x_{i}\right) ,y_{i}^{\ast
}\right\rangle \right) _{i=1}^{n}\right\Vert _{p}\leq C\sup_{\phi \in B_{%
\mathcal{P}^{r}\left( ^{m}E\right) }^{+}}\left( \sum\limits_{i=1}^{n}\phi
\left( \left\vert x_{i}\right\vert \right) ^{q}\right) ^{\frac{1}{q}%
}\left\Vert \left( \left\vert y_{i}^{\ast }\right\vert \right)
_{i=1}^{n}\right\Vert _{r, w}
\end{equation*}%
for every $(x_{i})_{i=1}^{n}\subset E$ and $\left( y_{i}^{\ast }\right)
_{i=1}^{n}\subset F^{\ast }.$ It is straightforward to check that 
\begin{equation}
\mathcal{P}_{f}(^{m}E;F)\subset \mathcal{P}_{w,\left( q,r\right) }^{+}\left(
^{m}E;F\right) .  \label{4.1}
\end{equation}

\begin{proposition}
Let $P\in \mathcal{P}_{w,\left( p,r\right) }^{+}\left( ^{m}E;F\right) ,u\in 
\mathcal{L}^{+}\left( G;E\right) $ and $v\in \mathcal{L}^{+}(F;H).$ Then $%
v\circ P\circ u\in \mathcal{P}_{w,\left( q,r\right) }^{+}\left(
^{m}G;H\right) $ and we have%
\begin{equation*}
d_{w,\left( q,r\right) }^{m+}\left( v\circ P\circ u\right) \leq \left\Vert
v\right\Vert d_{w,\left( q,r\right) }^{m+}(P)\left\Vert u\right\Vert ^{m}.
\end{equation*}
\end{proposition}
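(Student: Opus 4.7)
The plan is to reduce the statement to Definition~\ref{definition7} applied to $P$ itself, by pushing $u$ and $v$ through the pairing. I would take arbitrary finite families $(x_i)_{i=1}^n\subset G^+$ and $(y_i^*)_{i=1}^n\subset H^{*+}$, set $z_i=u(x_i)\in E^+$ (since $u\geq 0$) and $w_i^*=v^*(y_i^*)\in F^{*+}$ (since $v\geq 0$ implies $v^*\geq 0$), and rewrite
\[
\langle (v\circ P\circ u)(x_i),y_i^*\rangle = \langle P(z_i),w_i^*\rangle.
\]
Applying the positive weakly $(q,r)$-dominated property of $P$ to the families $(z_i)$ and $(w_i^*)$ yields
\[
\bigl\|(\langle (v\circ P\circ u)(x_i),y_i^*\rangle)_{i=1}^n\bigr\|_p\leq d_{w,(q,r)}^{m+}(P)\cdot A\cdot B,
\]
where $A=\sup_{\phi\in B_{\mathcal{P}^r(^mE)}^+}(\sum_{i=1}^n\phi(u(x_i))^q)^{1/q}$ and $B=\|(v^*(y_i^*))_{i=1}^n\|_{r,|w|}$.

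The remainder of the argument estimates $A$ and $B$ separately. For $B$, the entries $v^*(y_i^*)$ are positive, so $\|(v^*(y_i^*))\|_{r,|w|}=\|(v^*(y_i^*))\|_{r,w}$, and the standard adjoint inequality gives $\|(v^*(y_i^*))\|_{r,w}\leq\|v^*\|\,\|(y_i^*)\|_{r,w}=\|v\|\,\|(y_i^*)\|_{r,|w|}$, using positivity of $(y_i^*)$ once more. For $A$, the key observation is the pullback: for every $\phi\in B_{\mathcal{P}^r(^mE)}^+$, the composition $\phi\circ u$ is a positive $m$-homogeneous polynomial on $G$ satisfying $\|\phi\circ u\|\leq\|\phi\|\,\|u\|^m\leq\|u\|^m$. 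Assuming $\|u\|>0$ (the case $u=0$ is trivial since $P$ is $m$-homogeneous with $m\geq 1$), this yields $\|u\|^{-m}(\phi\circ u)\in B_{\mathcal{P}^r(^mG)}^+$, so
\[
A\leq\|u\|^m\sup_{\psi\in B_{\mathcal{P}^r(^mG)}^+}\Bigl(\sum_{i=1}^n\psi(x_i)^q\Bigr)^{1/q}.
\]
Multiplying the two estimates produces the required inequality with constant $\|v\|\,d_{w,(q,r)}^{m+}(P)\,\|u\|^m$, which establishes both that $v\circ P\circ u\in\mathcal{P}_{w,(q,r)}^{+}(^mG;H)$ and the quantitative bound.

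The only mildly delicate step is the pullback $\phi\mapsto\phi\circ u$: one must check that it maps $B_{\mathcal{P}^r(^mE)}^+$ into $\|u\|^m\cdot B_{\mathcal{P}^r(^mG)}^+$. Positivity is preserved because $u(G^+)\subseteq E^+$, regularity transfers through the decomposition $\phi=\phi_1-\phi_2$ with $\phi_j\geq 0$ applied termwise to $u$, and the norm estimate $\|\phi\circ u\|\leq\|\phi\|\,\|u\|^m$ follows directly from the supremum definition of the polynomial norm. With these small facts in hand, the argument runs parallel to the multilinear version proved earlier in the section, only simpler because there is a single positive operator acting on the domain side.
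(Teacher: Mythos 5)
Your argument is correct, but it does not follow the paper's own computation. The paper's printed proof bounds
\begin{equation*}
\Bigl(\sum_{i=1}^{n}\bigl\vert \langle v\circ P\circ u(x_{i}),y_{i}^{\ast }\rangle \bigr\vert ^{p}\Bigr)^{\frac{1}{p}}
\leq d_{w,(q,r)}^{m+}(P)\,\bigl\Vert (u(x_{i}))_{i=1}^{n}\bigr\Vert _{q,\vert w\vert }^{m}\,\bigl\Vert (v^{\ast }y_{i}^{\ast })_{i=1}^{n}\bigr\Vert _{r,\vert w\vert },
\end{equation*}
that is, it passes through the $m$-th power of the weak $\ell_{q}$-norm of $(u(x_{i}))$, a quantity that is not the one appearing in Definition~\ref{definition7} (which uses $\sup_{\phi \in B_{\mathcal{P}^{r}(^{m}E)}^{+}}(\sum_{i}\phi (x_{i})^{q})^{1/q}$); as written, that step needs an additional, unstated comparison between the two quantities. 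You instead stay with the quantity of Definition~\ref{definition7} and transfer it from $E$ to $G$ through the pullback $\phi \mapsto \Vert u\Vert ^{-m}(\phi \circ u)$, checking positivity ($u(G^{+})\subseteq E^{+}$), regularity, and the norm bound $\Vert \phi \circ u\Vert \leq \Vert \phi \Vert \,\Vert u\Vert ^{m}$, so that $B_{\mathcal{P}^{r}(^{m}E)}^{+}$ is carried into $\Vert u\Vert ^{m}B_{\mathcal{P}^{r}(^{m}G)}^{+}$; this is exactly parallel to the paper's proof of the multilinear ideal property, where the supremum over $B_{\mathcal{L}^{r}(E_{1},\ldots ,E_{m})}^{+}$ is transferred by composing $\varphi $ with $(u_{1},\ldots ,u_{m})$. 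The treatment of the range side is the same in both arguments: $v^{\ast }$ preserves positivity and $\Vert (v^{\ast }y_{i}^{\ast })\Vert _{r,\vert w\vert }\leq \Vert v\Vert \,\Vert (y_{i}^{\ast })\Vert _{r,\vert w\vert }$. In short, your route is self-contained and faithful to the stated definition, while the paper's is shorter but relies on an implicit estimate of the domination quantity by $\Vert (x_{i})\Vert _{q,\vert w\vert }^{m}$; your only extra obligations are the small pullback lemma (which you state and justify) and the trivial case $u=0$, which you also handle.
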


\begin{proof}
Let $(x_{i})_{i=1}^{n}\subset E^{+}$ and $\left( y_{i}^{\ast }\right)
_{i=1}^{n}\subset F^{\ast +}.$ Then%
\begin{eqnarray*}
(\sum\limits_{i=1}^{n}\left\vert \left\langle v\circ P\circ u\left(
x_{i}\right) ,y_{i}^{\ast }\right\rangle \right\vert ^{p})^{\frac{1}{p}}
&=&(\sum\limits_{i=1}^{n}\left\vert \left\langle P\circ u\left(
x_{i}\right) ,v^{\ast }\circ y_{i}^{\ast }\right\rangle \right\vert ^{p})^{%
\frac{1}{p}} \\
&\leq &d_{w,\left( q,r\right) }^{m+}(P)\left\Vert \left( u\left(
x_{i}\right) \right) _{i=1}^{n}\right\Vert _{q,\left\vert w\right\vert
}^{m}\left\Vert \left( v^{\ast }\circ y_{i}^{\ast }\right)
_{i=1}^{n}\right\Vert _{r,\left\vert w\right\vert } \\
&\leq &d_{w,\left( q,r\right) }^{m+}(P)\left\Vert u\right\Vert
^{m}\left\Vert \left( x_{i}\right) _{i=1}^{n}\right\Vert _{q,\left\vert
w\right\vert }^{m}\left\Vert v^{\ast }\right\Vert \left\Vert \left(
y_{i}^{\ast }\right) _{i=1}^{n}\right\Vert _{r,\left\vert w\right\vert } \\
&\leq &\left\Vert v\right\Vert d_{w,\left( q,r\right) }^{m+}(P)\left\Vert
u\right\Vert ^{m}\left\Vert \left( x_{i}\right) _{i=1}^{n}\right\Vert
_{q,\left\vert w\right\vert }^{m}\left\Vert \left( y_{i}^{\ast }\right)
_{i=1}^{n}\right\Vert _{r,\left\vert w\right\vert }
\end{eqnarray*}%
thus $v\circ P\circ u$ is positive weakly $(q, r)$-dominated and%
\begin{equation*}
d_{w,\left( q,r\right) }^{m+}\left( v\circ P\circ u\right) \leq \left\Vert
v\right\Vert d_{w,\left( q,r\right) }^{m+}(P)\left\Vert u\right\Vert ^{m}.
\end{equation*}
\end{proof}

The pair $\left( \mathcal{P}_{w,\left( q,r\right) }^{+},d_{w,\left(
q,r\right) }^{m+}\right) $ defines a positive Banach polynomial ideal. The
proof follows directly from the previous Proposition and the inclusion $(\ref%
{4.1})$, while the remaining details are straightforward. We now turn to the
characterization of positive weakly $(q, r)$-dominated polynomials through a Pietsch
domination theorem.

\begin{theorem}[Pietsch Domination Theorem]
\label{thdo1}Let $m\in \mathbb{N}.$ Let $1\leq p,q,r\leq \infty $ with $%
\frac{1}{p}=\frac{1}{q}+\frac{1}{r}$. Let $E$ and $F$ be Banach lattices.
The following statements are equivalent.\newline
1) The polynomial $P\in \mathcal{P}\left( ^{m}E;F\right) $ is positive weakly $(q, r)$-dominated.\newline
2) There is a constant $C>0$ and Borel probability measures $\mu $ on $B_{%
\mathcal{P}^{r}\left( ^{m}E\right) }^{+}$ and $\eta $ on $B_{F^{\ast \ast
}}^{+}$ such that%
\begin{equation}
\left\vert \langle P(x),y^{\ast }\rangle \right\vert \leq C(\int_{B_{%
\mathcal{P}^{r}\left( ^{m}E\right) }^{+}}\phi \left( \left\vert x\right\vert
\right) ^{q}d\mu )^{\frac{1}{q}}(\int_{B_{F^{\ast \ast }}^{+}}\langle
\left\vert y^{\ast }\right\vert ,y^{\ast \ast }\rangle ^{r}d\eta )^{\frac{1}{%
r}}  \label{12345}
\end{equation}%
for all $(x,y^{\ast })\in E\times F^{\ast }$. Therefore, we have%
\begin{equation*}
d_{w,\left( q,r\right) }^{m+}(P)=\inf \{C>0:C\text{ \textit{satisfies }}(\ref%
{12345})\}.
\end{equation*}%
3) There is a constant $C>0$ and Borel probability measures $\mu $ on $B_{%
\mathcal{P}^{r}\left( ^{m}E\right) }^{+}$ and $\eta $ on $B_{F^{\ast \ast
}}^{+}$ such that%
\begin{equation}
\left\vert \langle P(x),y^{\ast }\rangle \right\vert \leq C(\int_{B_{%
\mathcal{P}^{r}\left( ^{m}E\right) }^{+}}\phi \left( x\right) ^{q}d\mu )^{%
\frac{1}{q}}(\int_{B_{F^{\ast \ast }}^{+}}\langle y^{\ast },y^{\ast \ast
}\rangle ^{r}d\eta )^{\frac{1}{r}}  \label{def2sec5}
\end{equation}%
for all $(x,y^{\ast })\in E^{+}\times F^{\ast +}$. Therefore, we have%
\begin{equation*}
d_{w,\left( q,r\right) }^{m+}(P)=\inf \{C>0:C\text{ \textit{satisfies }}(\ref%
{def2sec5})\}.
\end{equation*}
\end{theorem}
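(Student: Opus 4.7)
The plan is to mirror the argument for the multilinear Pietsch theorem established earlier in this paper, specializing the abstract domination machinery of Pellegrino, Santos and Seoane-Sep\'{u}lveda \cite[Theorem 4.6]{PSS12} to the polynomial setting. The identification $\mathcal{P}^{r}(^{m}E)=(\widehat{\otimes}_{s,|\pi|}^{m}E)^{*}$ recalled in the preliminaries supplies the weak-$*$ compactness of the index set $B_{\mathcal{P}^{r}(^{m}E)}^{+}$, since it is the intersection of the weak-$*$ compact unit ball of $(\widehat{\otimes}_{s,|\pi|}^{m}E)^{*}$ with the weak-$*$ closed positive cone.

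For the implication $1)\Longrightarrow 2)$, I would invoke the equivalent reformulation of Definition \ref{definition7} stated in terms of absolute values, and then choose the parameters
\begin{equation*}
\left\{
\begin{array}{l}
S(P,(x,y^{\ast}),\lambda_{1},\lambda_{2})=|\lambda_{2}|\,|\langle P(x),y^{\ast}\rangle|, \\
R_{1}(\phi,(x,y^{\ast}),\lambda_{1})=\phi(|x|), \\
R_{2}(y^{\ast\ast},(x,y^{\ast}),\lambda_{2})=|\lambda_{2}|\,\langle |y^{\ast}|,y^{\ast\ast}\rangle,
\end{array}
\right.
\end{equation*}
with domains $\mathcal{P}(^{m}E;F)\times(E\times F^{*})\times\mathbb{R}\times\mathbb{R}$, $B_{\mathcal{P}^{r}(^{m}E)}^{+}\times(E\times F^{*})\times\mathbb{R}$ and $B_{F^{**}}^{+}\times(E\times F^{*})\times\mathbb{R}$ respectively, and verify conditions $(1)$ and $(2)$ of \cite[p.~1255]{PSS12}. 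With these choices, saying that $P$ is positive weakly $(q,r)$-dominated reads exactly as the $R_{1},R_{2}$-$S$-abstract $(q,r)$-summing property, and \cite[Theorem 4.6]{PSS12} then yields Borel probability measures $\mu$ on $B_{\mathcal{P}^{r}(^{m}E)}^{+}$ and $\eta$ on $B_{F^{\ast\ast}}^{+}$ for which (\ref{12345}) holds with the sharp constant equal to $d_{w,(q,r)}^{m+}(P)$.

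The implication $2)\Longrightarrow 3)$ is immediate by restricting (\ref{12345}) to $x\in E^{+}$ and $y^{\ast}\in F^{\ast+}$, in which case $|x|=x$ and $|y^{\ast}|=y^{\ast}$. For $3)\Longrightarrow 1)$ the plan is to raise (\ref{def2sec5}) to the $p$-th power, sum over $i=1,\ldots,n$, and apply H\"{o}lder's inequality with conjugate exponents $q/p$ and $r/p$ (admissible because $\tfrac{1}{p}=\tfrac{1}{q}+\tfrac{1}{r}$ yields $\tfrac{p}{q}+\tfrac{p}{r}=1$); interchanging the finite sums with the integrals by linearity and then dominating each integrand by its supremum on the respective probability space recovers precisely inequality (\ref{def1sec3}). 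The main technical point is the verification that the triple $(S,R_{1},R_{2})$ satisfies the structural hypotheses of \cite[Theorem 4.6]{PSS12}; once this book-keeping is done, the remaining steps are direct analogues of those already carried out in the multilinear version.
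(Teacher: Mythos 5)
Your proposal is correct and follows essentially the same route as the paper: both invoke the full general Pietsch domination theorem of Pellegrino--Santos--Sep\'{u}lveda with exactly the same choices of $S$, $R_{1}$ and $R_{2}$, using $\mathcal{P}^{r}(^{m}E)=\bigl(\widehat{\otimes}_{s,\left\vert \pi \right\vert}^{m}E\bigr)^{\ast}$ to work on $B_{\mathcal{P}^{r}\left( ^{m}E\right)}^{+}$, and then close the cycle via $2)\Rightarrow 3)\Rightarrow 1)$. The only difference is that you spell out the H\"{o}lder/supremum argument for $3)\Rightarrow 1)$, which the paper dismisses as straightforward; your sketch of it is sound.
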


\begin{proof}
$1)\Longleftrightarrow 2):$ We will choose the parameters as specified in 
\cite[Theorem 4.6]{PSS12} 
\begin{equation*}
\left\{ 
\begin{array}{l}
S:\mathcal{P}\left( ^{m}E;F\right) \times \left( E\times F^{\ast }\right)
\times \mathbb{R\times R}\rightarrow \mathbb{R}^{+}: \\ 
S\left( P,\left( x,y^{\ast }\right) ,\lambda _{1},\lambda _{2}\right)
=\left\vert \lambda _{2}\right\vert |\langle P(x),y^{\ast }\rangle | \\ 
R_{1}:B_{\mathcal{P}^{r}\left( ^{m}E\right) }^{+}\times \left( E\times
F^{\ast }\right) \times \mathbb{R}\rightarrow \mathbb{R}^{+}:R_{1}(\phi
,\left( x,y^{\ast }\right) ,\lambda _{1})=\langle |x|,\phi \rangle  \\ 
R_{2}:B_{F^{\ast \ast }}^{+}\times \left( E\times F^{\ast }\right) \times 
\mathbb{R}\rightarrow \mathbb{R}^{+}:R_{2}(y^{\ast \ast },\left( x,y^{\ast
}\right) ,\lambda _{2})=\left\vert \lambda _{2}\right\vert \langle |y^{\ast
}|,y^{\ast \ast }\rangle .%
\end{array}%
\right. 
\end{equation*}%
These maps satisfy conditions $\left( 1\right) $ and $\left( 2\right) $ from 
\cite[p. 1255]{PSS12}, allowing us to conclude that $P:E\rightarrow F$ is
positive weakly $(q, r)$-dominated if, and only if, 
\begin{eqnarray*}
&&(\sum\limits_{i=1}^{n}S\left( P,\left( x_{i},y_{i}^{\ast }\right)
,\lambda _{i,1},\lambda _{i,2}\right) ^{p})^{\frac{1}{p}} \\
&\leq &C\sup_{x^{\ast }\in B_{E^{\ast
}}^{+}}(\sum\limits_{i=1}^{n}R_{1}(x^{\ast },\left( x_{i},y_{i}^{\ast
}\right) ,\lambda _{i,1})^{q})^{\frac{1}{q}}\sup_{y^{\ast \ast }\in
B_{F^{\ast \ast }}^{+}}(\sum\limits_{i=1}^{n}R_{2}(y^{\ast \ast },\left(
x_{i},y_{i}^{\ast }\right) ,\lambda _{i,2})^{r})^{\frac{1}{r}},
\end{eqnarray*}%
i.e., $P$ is $R_{1},R_{2}$-$S$-abstract $(q,r)$-summing. As outlined in \cite%
[Theorem 4.6]{PSS12}, this implies that $P$ is $R_{1},R_{2}$-$S$-abstract $%
(q,r)$-summing if, and only if, there exists a positive constant $C$ and
probability measures $\mu $ on $B_{\mathcal{P}^{r}\left( ^{m}E\right) }^{+}$
and $\eta $ on $B_{F^{\ast \ast }}^{+}$, such that 
\begin{eqnarray*}
&&S\left( P,\left( x,y^{\ast }\right) ,\lambda _{1},\lambda _{2}\right)  \\
&\leq &C(\int_{B_{E^{\ast }}^{+}}R_{1}(x^{\ast },\left( x,y^{\ast }\right)
,\lambda _{1})^{q}d\mu )^{\frac{1}{q}}(\int_{B_{F^{\ast \ast
}}^{+}}R_{2}(y^{\ast \ast },\left( x,y^{\ast }\right) ,\lambda
_{2})^{r}d\eta )^{\frac{1}{r}}.
\end{eqnarray*}%
Consequently 
\begin{equation*}
\left\vert \langle P(x),y^{\ast }\rangle \right\vert \leq C(\int_{B_{%
\mathcal{P}^{r}\left( ^{m}E\right) }^{+}}\langle \left\vert x\right\vert
,\phi \rangle ^{q}d\mu )^{\frac{1}{q}}(\int_{B_{F^{\ast \ast }}^{+}}\langle
\left\vert y^{\ast }\right\vert ,y^{\ast \ast }\rangle ^{r}d\eta )^{\frac{1}{%
r}}.
\end{equation*}%
The implications $2)\Longrightarrow 3)$ and $3)\Longrightarrow 1)$ are
straightforward to prove.
\end{proof}

\section{Tensorial representation}

Tensorial representation plays a fundamental role in the study of Banach
spaces. For the projective tensor product, it is well known that the space
of multilinear operators $\mathcal{L}\left( X_{1},\ldots,X_{m};Y\right) $
coincides with the dual of $X_{1}\widehat{\otimes }_{\pi }\cdots \widehat{%
\otimes }_{\pi }X_{m}\widehat{\otimes }_{\pi }Y^{\ast }$. Similarly,
replacing the projective norm with the injective norm $\varepsilon $ yields
a coincidence with the space of integral multilinear operators. For this
reason, tensorial representation has become a standard objective in the
study of new classes of operators. It is therefore natural, when introducing
a new class, to seek an appropriate tensor norm that produces an analogous
identification. In the present section, we define a tensor norm on the
algebraic tensor product $E_{1}\otimes \cdots \otimes E_{m}\otimes F^{\ast }$
and show that its topological dual is isometric to the space of positive
weakly $(q,r)$-dominated multilinear operators. This tensorial approach
provides a natural framework to identify such operators. In the polynomial
case, we similarly define a tensor norm on $\left( \widehat{\otimes }%
_{s,\left\vert \pi \right\vert }^{m}E\right) \otimes F$ and show that its
topological dual is isometric to the space of positive weakly $(q,r)$%
-dominated polynomials.

\subsection{Multilinear case}

Fix $m\in \mathbb{N}^{\ast }.$ Let $E_{1},\ldots,E_{m},F$ be Banach lattices.
Let $1\leq p,q,r\leq \infty $ such that $\frac{1}{p}=\frac{1}{q}+\frac{1}{r}$
and $u\in E_{1}\otimes \cdots \otimes E_{m}\otimes F.$ For simplify, we
denote by 
\begin{equation*}
\widehat{E}_{\left\vert \pi \right\vert }=E_{1}\widehat{\otimes }%
_{\left\vert \pi \right\vert }\cdots \widehat{\otimes }_{\left\vert \pi
\right\vert }E_{m}.
\end{equation*}%
Consider%
\begin{equation}
\mu _{\left( q;r\right) }^{m+}\left( u\right) =\inf \left\{ \left\Vert
\left( \lambda _{i}\right) \right\Vert _{\ell _{p^{\ast }}^{n}}\left\Vert
(x_{i}^{1}\otimes \cdots \otimes x_{i}^{m})\right\Vert _{\ell _{q,\left\vert
w\right\vert }^{n}\left( \widehat{E}_{\left\vert \pi \right\vert }\right)
}\left\Vert \left( y_{i}\right) \right\Vert _{\ell _{r,\left\vert
w\right\vert }^{n}\left( F\right) }\right\} ,  \label{3.2}
\end{equation}%
where the infimum is taken over all general representations of $u$ of the
form%
\begin{equation}
u=\sum\limits_{i=1}^{n}\lambda _{i}x_{i}^{1}\otimes \cdots \otimes
x_{i}^{m}\otimes y_{i},  \label{3.3}
\end{equation}%
with $\left( x_{i}^{j}\right) _{i=1}^{n}\subset E_{j},\left( y_{i}\right)
_{i=1}^{n}\subset F,\left( 1\leq j\leq m\right) $ and $n\in \mathbb{N}^{\ast
}.$

\begin{lemma}
\label{Lemma1}Let $u\in E_{1}\otimes \cdots \otimes E_{m}\otimes F$ of the
form $\left( \ref{3.3}\right) .$ The following properties are equivalent.%
\newline
1) $u=0.$\newline
2) $\sum\limits_{i=1}^{n}\lambda _{i}x_{1}^{\ast }\left( x_{i}^{1}\right)
\ldots.x_{m}^{\ast }\left( x_{i}^{m}\right) y^{\ast }\left( y_{i}\right) =0$
for every $x_{j}^{\ast }\subset E_{j}^{\ast +}$, $y^{\ast }\in F^{\ast
+},1\leq j\leq m$.\newline
3) $\sum\limits_{i=1}^{n}\lambda _{i}\varphi \left(
x_{i}^{1},\ldots,x_{i}^{m}\right) y^{\ast }\left( y_{i}\right) =0$ for every $%
\varphi \in \mathcal{L}^{+}\left( E_{1},\ldots ,E_{m}\right) $ and $y^{\ast
}\in F^{\ast +}$\newline
4) $\sum\limits_{i=1}^{n}\lambda _{i}\varphi \left(
x_{i}^{1},\ldots,x_{i}^{m}\right) y^{\ast }\left( y_{i}\right) =0$ for every $%
\varphi \in \mathcal{L}^{r}\left( E_{1},\ldots ,E_{m}\right) $ and $y^{\ast
}\in F^{\ast +}.$
\end{lemma}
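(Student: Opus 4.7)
The natural strategy is to prove the cycle $1\Rightarrow 4\Rightarrow 3\Rightarrow 2\Rightarrow 1$. Two of these implications are essentially free: $4\Rightarrow 3$ holds because every positive multilinear form lies in $\mathcal{L}^{r}(E_{1},\ldots,E_{m})$, so a condition holding for all regular $\varphi$ automatically holds for all positive $\varphi$; and $3\Rightarrow 2$ follows by specializing to $\varphi=x_{1}^{\ast}\otimes\cdots\otimes x_{m}^{\ast}$ with each $x_{j}^{\ast}\in E_{j}^{\ast+}$, since such elementary tensors are positive $m$-linear forms.

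For $1\Rightarrow 4$, I would fix $\varphi\in\mathcal{L}^{r}(E_{1},\ldots,E_{m})$ and $y^{\ast}\in F^{\ast+}$, and observe that the map $(z^{1},\ldots,z^{m},z)\longmapsto \varphi(z^{1},\ldots,z^{m})\,y^{\ast}(z)$ is a bounded $(m+1)$-linear form on $E_{1}\times\cdots\times E_{m}\times F$. By the universal property of the algebraic tensor product it extends to a linear functional on $E_{1}\otimes\cdots\otimes E_{m}\otimes F$, and its value at $u$ is precisely $\sum_{i=1}^{n}\lambda_{i}\varphi(x_{i}^{1},\ldots,x_{i}^{m})y^{\ast}(y_{i})$, which must vanish since $u=0$.

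The substantive step is $2\Rightarrow 1$. The idea is that simple products of positive functionals already separate the algebraic tensor product. For arbitrary $x_{j}^{\ast}\in E_{j}^{\ast}$ and $y^{\ast}\in F^{\ast}$, I would apply the lattice decomposition $x_{j}^{\ast}=x_{j}^{\ast+}-x_{j}^{\ast-}$, $y^{\ast}=y^{\ast+}-y^{\ast-}$, which is available because duals of Banach lattices are Banach lattices. Expanding the product by $(m+1)$-fold multi-linearity gives
\begin{equation*}
x_{1}^{\ast}(x_{i}^{1})\cdots x_{m}^{\ast}(x_{i}^{m})y^{\ast}(y_{i})=\sum_{\epsilon_{1},\ldots,\epsilon_{m},\epsilon\in\{+,-\}}(-1)^{\sharp(-)}x_{1}^{\ast\epsilon_{1}}(x_{i}^{1})\cdots x_{m}^{\ast\epsilon_{m}}(x_{i}^{m})y^{\ast\epsilon}(y_{i}),
\end{equation*}
where $\sharp(-)$ counts the negative superscripts. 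Multiplying by $\lambda_{i}$ and summing over $i$, each of the $2^{m+1}$ resulting sums is exactly of the form handled by hypothesis $2$ and therefore vanishes. Hence $\langle x_{1}^{\ast}\otimes\cdots\otimes x_{m}^{\ast}\otimes y^{\ast},u\rangle=0$ for arbitrary functionals, and the standard fact that elementary tensor products of linear functionals separate $E_{1}\otimes\cdots\otimes E_{m}\otimes F$ forces $u=0$.

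The main (and only real) obstacle is the bookkeeping in the $2\Rightarrow 1$ step: one must verify that the sign pattern in the $2^{m+1}$-term expansion genuinely reduces to positive-functional evaluations covered by hypothesis $2$, and then invoke the separating property of the algebraic tensor product. Both are classical, so once the combinatorial expansion is written out, no deeper computation is required.
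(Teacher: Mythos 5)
Your proposal is correct and follows essentially the same route as the paper: the substantive step $2\Rightarrow 1$ is exactly the paper's argument (decompose each $x_{j}^{\ast}$ and $y^{\ast}$ into positive and negative parts, expand into $2^{m+1}$ sums covered by hypothesis 2, then invoke the separation of the algebraic tensor product by elementary functionals, i.e.\ Ryan's Proposition 1.2), and the remaining implications are the same trivial specializations. The only difference is organizational: you close the cycle via a direct $1\Rightarrow 4$, which lets you bypass the paper's explicit decomposition $\varphi=T_{1}-T_{2}$ of a regular form used in its $3\Rightarrow 4$ step, but this does not change the substance of the argument.
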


\begin{proof}
$1)\Leftrightarrow 2):$ The first implication is straightforward by \cite[%
Proposition 1.2]{Rya}. For the second, assume $2)$ holds. Let $x_{j}^{\ast
}\in E_{j}^{\ast }$ for $1\leq j\leq m$ and $y^{\ast }\in F^{\ast }.$ Then%
\begin{eqnarray*}
&&\sum\limits_{i=1}^{n}x_{1}^{\ast }\left( x_{i}^{1}\right) \cdots
x_{m}^{\ast }\left( x_{i}^{m}\right) y^{\ast }\left( y_{i}\right)  \\
&=&\sum\limits_{i=1}^{n}\left( x_{1}^{\ast +}-x_{1}^{\ast -}\right) \left(
x_{i}^{1}\right) \cdots \left( x_{m}^{\ast +}-x_{m}^{\ast -}\right) \left(
x_{i}^{m}\right) \left( y^{\ast +}-y^{\ast -}\right) \left( y_{i}\right) .
\end{eqnarray*}%
Expanding the products yields a finite sum of terms of the form%
\begin{equation*}
\sum\limits_{i=1}^{n}x_{1}^{\ast \epsilon _{1}}\left( x_{i}^{1}\right)
\cdots x_{m}^{\ast \epsilon _{m}}\left( x_{i}^{m}\right) y^{\ast \epsilon
}\left( y_{i}\right) ,
\end{equation*}%
where $\epsilon _{1},\ldots,\epsilon _{m}\in \left\{ +,-\right\} $ and $%
\epsilon \in \left\{ +,-\right\} .$ Each of these sums corresponds to
positive functionals and, by assumption $2)$, they all vanish. Therefore,%
\begin{equation*}
\sum\limits_{i=1}^{n}x_{1}^{\ast }\left( x_{i}^{1}\right) \cdots
x_{m}^{\ast }\left( x_{i}^{m}\right) y^{\ast }\left( y_{i}\right) =0.
\end{equation*}%
Hence $u=0.$

$2)\Leftrightarrow 3):$ Assume $2)$. By the same reasoning as above, we
obtain%
\begin{equation*}
\sum\limits_{i=1}^{n}\lambda _{i}x_{1}^{\ast }\left( x_{i}^{1}\right)
\cdots x_{m}^{\ast }\left( x_{i}^{m}\right) y^{\ast }\left( y_{i}\right) =0
\end{equation*}%
for all $x_{j}^{\ast }\subset E_{j}^{\ast },y^{\ast }\in F^{\ast }.$ From 
\cite[Proposition 1.2]{Rya}, this implies $u=0,$ and consequently%
\begin{equation*}
\left\langle u,\psi \right\rangle =0\text{ for every }\left( m+1\right) 
\text{-linear form }\psi .
\end{equation*}%
In particular, this holds for $\varphi \in \mathcal{L}^{+}\left(
E_{1},\ldots ,E_{m}\right) $ and $y^{\ast }\in F^{\ast +},$ so that%
\begin{equation*}
\sum\limits_{i=1}^{n}\lambda _{i}\varphi \left(
x_{i}^{1},\ldots,x_{i}^{m}\right) y^{\ast }\left( y_{i}\right) =0\text{.}
\end{equation*}%
Conversely, if $3)$ holds, take $x_{j}^{\ast }\subset E_{j}^{\ast +}$, $%
y^{\ast }\in F^{\ast +}.$ We have $x_{1}^{\ast }\otimes \cdots \otimes
x_{m}^{\ast }\in \mathcal{L}^{+}\left( E_{1},\ldots ,E_{m}\right) .$ Hence, 
\begin{equation*}
\sum\limits_{i=1}^{n}\lambda _{i}x_{1}^{\ast }\otimes \cdots\otimes
x_{m}^{\ast }\left( x_{i}^{1},\ldots,x_{i}^{m}\right) y^{\ast }\left(
y_{i}\right) =\sum\limits_{i=1}^{n}\lambda _{i}x_{1}^{\ast }\left(
x_{i}^{1}\right) \cdots x_{m}^{\ast }\left( x_{i}^{m}\right) y^{\ast }\left(
y_{i}\right) =0.
\end{equation*}%
$3)\Longrightarrow 4):$ Let $\varphi \in \mathcal{L}^{r}\left( E_{1},\ldots
,E_{m}\right) $, there exist $T_{1},T_{2}\in \mathcal{L}^{+}\left(
E_{1},\ldots ,E_{m}\right) $ such that 
\begin{equation*}
\varphi =T_{1}-T_{2}.
\end{equation*}%
Let $y^{\ast }\in F^{\ast +},$ we have%
\begin{eqnarray*}
&&\sum\limits_{i=1}^{n}\lambda _{i}\varphi \left(
x_{i}^{1},\ldots,x_{i}^{m}\right) y^{\ast }\left( y_{i}\right)  \\
&=&\sum\limits_{i=1}^{n}\lambda _{i}T_{1}\left(
x_{i}^{1},\ldots,x_{i}^{m}\right) y^{\ast }\left( y_{i}\right)
-\sum\limits_{i=1}^{n}\lambda _{i}T_{2}\left(
x_{i}^{1},\ldots,x_{i}^{m}\right) y^{\ast }\left( y_{i}\right)  \\
&=&0.
\end{eqnarray*}%
$4)\Longrightarrow 3):$ This is immediate, since $\mathcal{L}^{+}\left(
E_{1},\ldots ,E_{m}\right) \subset \mathcal{L}^{r}\left( E_{1},\ldots
,E_{m}\right) $.
\end{proof}

The following proposition can be proved easily.

\begin{proposition}
\textit{Let }$1\leq p,q,r\leq \infty $ \textit{such that} $\frac{1}{p}=\frac{%
1}{q}+\frac{1}{r}$ \textit{and} $m\in \mathbb{N}^{\ast }.$\textit{\ Then }$%
\mu _{\left( q;r\right) }^{m+}$\textit{\ is a tensor norm on }$E_{1}\otimes
\cdots \otimes E_{m}\otimes F.$
\end{proposition}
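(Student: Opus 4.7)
The plan is to verify the defining axioms of a tensor norm for $\mu_{(q;r)}^{m+}$: namely, that it is a norm on the algebraic tensor product $E_1\otimes\cdots\otimes E_m\otimes F$, bounded above on elementary tensors by the product of the factor norms, and satisfies the metric mapping property under positive operators. Absolute homogeneity and the elementary-tensor bound $\mu_{(q;r)}^{m+}(x^1\otimes\cdots\otimes x^m\otimes y)\leq \|x^1\|\cdots\|x^m\|\|y\|$ are immediate from the single-term representation. The triangle inequality follows by the standard concatenation of near-optimal representations of $u$ and $v$, combined with a balancing rescaling of the weight sequences $(\lambda_i)$ across the three factors in (\ref{3.2}); this rescaling relies crucially on the identity $\frac{1}{p^{\ast}}+\frac{1}{q}+\frac{1}{r}=1$.

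The key step is positive definiteness. Assuming $\mu_{(q;r)}^{m+}(u)=0$, fix any $\varphi\in B_{\mathcal{L}^r(E_1,\ldots,E_m)}^+$ and any $y^{\ast}\in B_{F^{\ast}}^+$. For any representation $u=\sum_{i=1}^n \lambda_i\, x_i^1\otimes\cdots\otimes x_i^m\otimes y_i$, the triple H\"older inequality with exponents $p^{\ast},q,r$ yields
\begin{equation*}
\Bigl|\sum_{i=1}^n \lambda_i\,\varphi(x_i^1,\ldots,x_i^m)\,y^{\ast}(y_i)\Bigr|\leq \|(\lambda_i)\|_{p^{\ast}}\Bigl(\sum_{i=1}^n|\varphi(x_i^1,\ldots,x_i^m)|^q\Bigr)^{1/q}\Bigl(\sum_{i=1}^n|y^{\ast}(y_i)|^r\Bigr)^{1/r}.
\end{equation*}
Using $|\varphi(x_i^1,\ldots,x_i^m)|\leq \varphi(|x_i^1|,\ldots,|x_i^m|)$ and $|y^{\ast}(y_i)|\leq y^{\ast}(|y_i|)$, together with the identification $\mathcal{L}^r(E_1,\ldots,E_m)=(\widehat{E}_{|\pi|})^{\ast}$ and the lattice identity $|x_i^1\otimes\cdots\otimes x_i^m|=|x_i^1|\otimes\cdots\otimes|x_i^m|$ for the Fremlin tensor product, the right-hand side is dominated by $\|(\lambda_i)\|_{p^{\ast}}\cdot\|(x_i^1\otimes\cdots\otimes x_i^m)\|_{q,|w|}\cdot\|(y_i)\|_{r,|w|}$. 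Passing to the infimum over representations shows $\langle u,\varphi\otimes y^{\ast}\rangle=0$ for every positive $\varphi$ and every positive $y^{\ast}$, and Lemma \ref{Lemma1} then forces $u=0$.

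For the metric mapping property, let $u_j\in\mathcal{L}^+(E_j;G_j)$ and $w\in\mathcal{L}^+(F;H)$. Any representation of $u$ pushes forward to a representation of $(u_1\otimes\cdots\otimes u_m\otimes w)(u)$ of the same length, with the same scalar weights and tensors $u_1(x_i^1)\otimes\cdots\otimes u_m(x_i^m)\otimes w(y_i)$. Since each $u_j$ is positive, the map $u_1\otimes\cdots\otimes u_m$ extends to a positive bounded operator $\widehat{E}_{|\pi|}\to\widehat{G}_{|\pi|}$ of norm $\prod_j\|u_j\|$; by the dual description of $\|\cdot\|_{q,|w|}$ this contracts the weakly $(q,|w|)$-summable norm by the same factor, and an analogous (easier) computation applies to $w$. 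Combining the three estimates gives
\begin{equation*}
\mu_{(q;r)}^{m+}\bigl((u_1\otimes\cdots\otimes u_m\otimes w)(u)\bigr)\leq \|u_1\|\cdots\|u_m\|\,\|w\|\,\mu_{(q;r)}^{m+}(u).
\end{equation*}

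The main routine-but-technical point I expect to address carefully is the last claim: that a tensor product of positive linear operators lifts to a positive bounded operator on the positive projective tensor product of the expected norm, and therefore contracts the weakly $(q,|w|)$-summable norm with the same constant. This ultimately rests on the isometric lattice isomorphism $\mathcal{L}^r(E_1,\ldots,E_m)=(\widehat{E}_{|\pi|})^{\ast}$ and on the characterisation of $B_{(\widehat{E}_{|\pi|})^\ast}^+$ recorded in the preliminaries, and it is presumably the reason the authors say the proof goes through ``easily''.
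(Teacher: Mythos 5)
Your proposal is correct and follows essentially the same route as the paper: homogeneity is immediate, positive definiteness is obtained exactly as in the paper via the triple H\"older estimate $\left\vert \left\langle u,\varphi \otimes y^{\ast }\right\rangle \right\vert \leq \mu _{\left( q;r\right) }^{m+}\left( u\right) $ for positive $\varphi $ and $y^{\ast }$ combined with Lemma \ref{Lemma1}, and the triangle inequality by concatenating near-optimal representations after the standard rescaling based on $\frac{1}{p^{\ast }}+\frac{1}{q}+\frac{1}{r}=1$. Your extra verification of the metric mapping property under positive operators goes beyond what the paper checks in this proposition (the paper only establishes the norm axioms here and proves $\varepsilon \leq \mu _{\left( q;r\right) }^{m+}\leq \pi $ separately), and it is a sound supplement.
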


\begin{proof}
It is clear that for any element $u\in E_{1}\otimes \cdots\otimes E_{m}\otimes
F $ of the form $\left( \ref{3.3}\right) $ and any scalar $\alpha $ we have%
\begin{equation*}
\mu _{\left( q;r\right) }^{m+}\left( u\right) \geq 0\text{ and }\mu _{\left(
q;r\right) }^{m+}\left( \alpha u\right) =\left\vert \alpha \right\vert \mu
_{\left( q;r\right) }^{m+}\left( u\right) .
\end{equation*}%
Let $\varphi \in B^{+}_{\mathcal{L}^{r}\left( E_{1},\ldots ,E_{m}\right) }$
and $y^{\ast }\in B_{F^{\ast +}}.$ Then,

$\left\vert \left\langle u,\varphi \otimes y^{\ast }\right\rangle
\right\vert $

$=\left\vert \sum\limits_{i=1}^{n}\lambda _{i}\varphi \left(
x_{i}^{1},\ldots,x_{i}^{m}\right) y^{\ast }\left( y_{i}\right) \right\vert $

$\leq $ $\sum\limits_{i=1}^{n}\left\vert \lambda _{i}\right\vert \varphi
\left( \left\vert x_{i}^{1}\right\vert ,\ldots,\left\vert x_{i}^{m}\right\vert
\right) y^{\ast }\left( \left\vert y_{i}\right\vert \right) $ by H\"{o}lder

$\leq \left\Vert \left( \lambda _{i}\right) \right\Vert _{\ell _{p^{\ast
}}^{n}}(\sum\limits_{i=1}^{n}\varphi \left( \left\vert x_{i}^{1}\right\vert
,\ldots,\left\vert x_{i}^{m}\right\vert \right) ^{q})^{\frac{1}{q}%
}(\sum\limits_{i=1}^{n}y^{\ast }\left( \left\vert y_{i}\right\vert \right)
^{r})^{\frac{1}{r}}$

$\leq \left\Vert \left( \lambda _{i}\right) \right\Vert _{\ell _{p^{\ast
}}^{n}}\sup_{\varphi \in B_{\mathcal{L}^{r}\left( E_{1},\ldots ,E_{m}\right)
}^{+}}(\sum\limits_{i=1}^{n}\varphi \left( \left\vert x_{i}^{1}\right\vert
,\ldots,\left\vert x_{i}^{m}\right\vert \right) ^{q})^{\frac{1}{q}%
}\sup_{y^{\ast }\in B_{F^{\ast +}}}(\sum\limits_{i=1}^{n}y^{\ast }\left(
\left\vert y_{i}\right\vert \right) ^{r})^{\frac{1}{r}}.$\newline
Since 
\begin{equation*}
\sup_{\varphi \in B_{\mathcal{L}^{r}\left( E_{1},\ldots ,E_{m}\right)
}^{+}}(\sum\limits_{i=1}^{n}\varphi \left( \left\vert x_{i}^{1}\right\vert
,\ldots,\left\vert x_{i}^{m}\right\vert \right) ^{q})^{\frac{1}{q}}=\left\Vert
(x_{i}^{1}\otimes \cdots \otimes x_{i}^{m})\right\Vert _{\ell _{q,\left\vert
w\right\vert }^{n}\left( \widehat{E}_{\left\vert \pi \right\vert }\right) }.
\end{equation*}%
We obtain%
\begin{equation*}
\left\vert \left\langle u,\varphi \otimes y^{\ast }\right\rangle \right\vert
\leq \left\Vert \left( \lambda _{i}\right) \right\Vert _{\ell _{p^{\ast
}}^{n}}\left\Vert (x_{i}^{1}\otimes \cdots \otimes x_{i}^{m})\right\Vert
_{\ell _{q,\left\vert w\right\vert }^{n}\left( \widehat{E}_{\left\vert \pi
\right\vert }\right) }\left\Vert \left( y_{i}\right) \right\Vert _{\ell
_{r,\left\vert w\right\vert }^{n}\left( F\right) }.
\end{equation*}%
By taking the infimum over all representations of $u,$ we obtain%
\begin{equation*}
\left\vert \left\langle u,\varphi \otimes y^{\ast }\right\rangle \right\vert
\leq \mu _{\left( q;r\right) }^{m+}\left( u\right) .
\end{equation*}%
Suppose that $\mu _{\left( q;r\right) }^{m+}\left( u\right) =0$, then 
\begin{equation*}
\left\vert \left\langle u,\varphi \otimes y^{\ast }\right\rangle \right\vert
=0,
\end{equation*}%
consequently, by Lemma \ref{Lemma1} $u=0.$ Let now $u_{1},u_{2}\in E\otimes F
$ of the form $\left( \ref{3.3}\right) $. By the definition of $\mu _{\left(
q;r\right) }^{m+},$ we can find representations%
\begin{eqnarray*}
u_{1} &=&\sum\limits_{i=1}^{s_{1}}\lambda _{1,i}x_{1,i}^{1}\otimes \cdots
\otimes x_{1,i}^{m}\otimes y_{1,i} \\
u_{2} &=&\sum\limits_{i=1}^{s_{2}}\lambda _{2,i}x_{2,i}^{1}\otimes \cdots
\otimes x_{2,i}^{m}\otimes y_{2,i}
\end{eqnarray*}%
such that%
\begin{equation*}
\left\Vert \left( \lambda _{1,i}\right) \right\Vert _{_{\ell _{p^{\ast
}}^{s_{1}}}}\left\Vert (x_{1,i}^{1}\otimes \cdots \otimes
x_{1,i}^{m})\right\Vert _{\ell _{q,\left\vert w\right\vert }^{s_{1}}\left( 
\widehat{E}_{\left\vert \pi \right\vert }\right) }\left\Vert \left(
y_{1,i}\right) \right\Vert _{\ell _{r,\left\vert w\right\vert
}^{s_{1}}\left( F\right) }\leq \mu _{\left( q;r\right) }^{m+}\left( u\right)
+\varepsilon .
\end{equation*}%
Replacing $\left( \lambda _{1,i}\right) ,\left( x_{1,i}^{1}\otimes \cdots
\otimes x_{1,i}^{m}\right) $ and $\left( y_{1,i}\right) $ by an appropriate
multiple of them,

$\lambda _{1,i}=\lambda _{1,i}\frac{\left\Vert \left( y_{1,i}\right)
\right\Vert _{\ell _{r}^{s_{1},w}\left( F\right) }^{\frac{1}{p^{\ast }}%
}\left\Vert (x_{1,i}^{1}\otimes \cdots \otimes x_{1,i}^{m})\right\Vert
_{\ell _{q,\left\vert w\right\vert }^{s_{1}}\left( \widehat{E}_{\left\vert
\pi \right\vert }\right) }^{\frac{1}{p^{\ast }}}}{\left\Vert \left( \lambda
_{1,i}\right) \right\Vert _{\ell _{p^{\ast }}^{s_{1}}}^{\frac{1}{p}}}$

$x_{1,i}^{1}\otimes \cdots \otimes x_{1,i}^{m}=x_{1,i}^{1}\otimes \cdots
\otimes x_{1,i}^{m}\frac{\left\Vert \left( \lambda _{1,i}\right) \right\Vert
_{\ell _{p^{\ast }}^{s_{1}}}^{\frac{1}{q}}\left\Vert \left( y_{1,i}\right)
\right\Vert _{\ell _{r}^{s_{1},w}\left( F\right) }^{\frac{1}{q}}}{\left\Vert
(x_{1,i}^{1}\otimes \cdots \otimes x_{1,i}^{m})\right\Vert _{\ell
_{q,\left\vert w\right\vert }^{s_{1}}\left( \widehat{E}_{\left\vert \pi
\right\vert }\right) }^{\frac{1}{q^{\ast }}}},$

$y_{1,i}=y_{1,i}\frac{\left\Vert \left( \lambda _{1,i}\right) \right\Vert
_{\ell _{p^{\ast }}^{s_{1}}}^{\frac{1}{r}}\left\Vert (x_{1,i}^{1}\otimes
\cdots \otimes x_{1,i}^{m})\right\Vert _{\ell _{q,\left\vert w\right\vert
}^{s_{1}}\left( \widehat{E}_{\left\vert \pi \right\vert }\right) }^{\frac{1}{%
r}}}{\left\Vert \left( y_{1,i}\right) \right\Vert _{\ell
_{r}^{s_{1},w}\left( F\right) }^{\frac{1}{r^{\ast }}}}.$

We can obtain%
\begin{equation*}
\begin{array}{rrr}
\left\Vert \left( \lambda _{1,i}\right) _{i=1}^{s_{1}}\right\Vert _{\ell
_{p^{\ast }}^{s_{1}}} & \leq  & \left( \mu _{\left( q;r\right) }^{m+}\left(
u_{1}\right) +\varepsilon \right) ^{\frac{1}{p^{\ast }}} \\ 
\left\Vert (x_{1,i}^{1}\otimes \cdots \otimes
x_{1,i}^{m})_{i=1}^{s_{1}}\right\Vert _{\ell _{q,\left\vert w\right\vert
}^{s_{1}}\left( \widehat{E}_{\left\vert \pi \right\vert }\right) } & \leq  & 
\left( \mu _{\left( q;r\right) }^{m+}\left( u_{1}\right) +\varepsilon
\right) ^{\frac{1}{q}}. \\ 
\left\Vert \left( y_{1,i}\right) _{i=1}^{s_{1}}\right\Vert _{\ell
_{r,\left\vert w\right\vert }^{s_{1}}\left( F\right) } & \leq  & \left( \mu
_{\left( q;r\right) }^{m+}\left( u_{1}\right) +\varepsilon \right) ^{\frac{1%
}{r}}.%
\end{array}%
\end{equation*}%
Similarly for $u_{2},$ we get%
\begin{eqnarray*}
&&\mu _{\left( q;r\right) }^{m+}\left( u_{1}+u_{2}\right)  \\
&\leq &(\left\Vert \left( \lambda _{1,i}\right) _{i=1}^{s_{1}}\right\Vert
_{\ell _{p^{\ast }}^{s_{1}}}^{p^{\ast }}+\left\Vert \left( \lambda
_{2,i}\right) _{i=1}^{s_{2}}\right\Vert _{\ell _{p^{\ast
}}^{s_{2}}}^{p^{\ast }})^{\frac{1}{p^{\ast }}}(\left\Vert
(x_{1,i}^{1}\otimes \cdots \otimes x_{1,i}^{m})_{i=1}^{s_{1}}\right\Vert
_{\ell _{q,\left\vert w\right\vert }^{s_{1}}\left( \widehat{E}_{\left\vert
\pi \right\vert }\right) }^{q} \\
&&+\left\Vert (x_{2,i}^{1}\otimes \cdots \otimes
x_{2,i}^{m})_{i=1}^{s_{2}}\right\Vert _{\ell _{q,\left\vert w\right\vert
}^{s_{2}}\left( \widehat{E}_{\left\vert \pi \right\vert }\right) }^{q})^{%
\frac{1}{q}}(\left\Vert \left( y_{1,i}\right) _{i=1}^{s_{1}}\right\Vert
_{\ell _{r,\left\vert w\right\vert }^{s_{1}}\left( F\right) }^{r}+\left\Vert
\left( y_{2,i}\right) _{i=1}^{s_{2}}\right\Vert _{\ell _{r,\left\vert
w\right\vert }^{s_{2}}\left( F\right) }^{r})^{\frac{1}{r}} \\
&\leq &\left( \mu _{\left( q;r\right) }^{m+}\left( u_{1}\right) +\mu
_{\left( q;r\right) }^{m+}\left( u_{2}\right) +2\varepsilon \right) ^{\frac{1%
}{p^{\ast }}}\left( \mu _{\left( q;r\right) }^{m+}\left( u_{1}\right) +\mu
_{\left( q;r\right) }^{m+}\left( u_{2}\right) +2\varepsilon \right) ^{\frac{1%
}{q}} \\
&&\times \left( \mu _{\left( q;r\right) }^{m+}\left( u_{1}\right) +\mu
_{\left( q;r\right) }^{m+}\left( u_{2}\right) +2\varepsilon \right) ^{\frac{1%
}{r}} \\
&\leq &\mu _{\left( q;r\right) }^{m+}\left( u_{1}\right) +\mu _{\left(
q;r\right) }^{m+}\left( u_{2}\right) +2\varepsilon .
\end{eqnarray*}%
By letting $\varepsilon $ tend to zero, we obtain the triangle inequality
for $\mu _{\left( q;r\right) }^{m+}.$
\end{proof}

\begin{proposition}
The norm $\mu _{\left( q;r\right) }^{m+}$ is reasonable, that is,%
\begin{equation}
\varepsilon \leq \mu _{\left( q;r\right) }^{m+}\leq \pi ,  \label{3.4}
\end{equation}%
\textit{\ where }$\varepsilon $ and $\pi $ denote the injective and
projective norms on $E_{1}\otimes \cdots\otimes E_{m}\otimes F,$ respectively.
\end{proposition}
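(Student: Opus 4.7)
I would prove the two inequalities independently, each via one standard device: $\mu_{(q;r)}^{m+}\le\pi$ through a scaling/H\"older trick on a near-optimal projective representation, and $\varepsilon\le\mu_{(q;r)}^{m+}$ through a direct duality estimate that exploits the lattice structure, in close analogy with the computation already carried out in the previous proposition for $\varphi\in B_{\mathcal L^r(E_1,\ldots,E_m)}^+$ and $y^*\in B_{F^{*+}}$.

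\textbf{Upper bound.} Fix $\delta>0$ and choose a representation $u=\sum_{i=1}^n z_i^1\otimes\cdots\otimes z_i^m\otimes w_i$ with $\sum_i\|z_i^1\|\cdots\|z_i^m\|\|w_i\|\le\pi(u)+\delta$. After normalizing each factor to the unit sphere (discarding zero terms), set $s_i=\|z_i^1\|\cdots\|z_i^m\|\|w_i\|$. The identity $\tfrac{1}{p^*}+\tfrac{1}{q}+\tfrac{1}{r}=1$, equivalent to $\tfrac{1}{p}=\tfrac{1}{q}+\tfrac{1}{r}$, allows the factorization $s_i=s_i^{1/p^*}\cdot s_i^{1/q}\cdot s_i^{1/r}$. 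Redistributing these three factors as the scalar $\lambda_i$, a scalar on (say) the first $E$-slot, and a scalar on the $F$-slot produces a new representation of $u$. Since weak-$|w|$ norms are dominated by the ambient norm, and since $\|\tilde z_i^1\otimes\cdots\otimes\tilde z_i^m\|_{|\pi|}\le 1$ for unit-sphere factors, the three factors in \eqref{3.2} are bounded respectively by $(\sum_i s_i)^{1/p^*}$, $(\sum_i s_i)^{1/q}$, $(\sum_i s_i)^{1/r}$, whose product equals $\sum_i s_i\le\pi(u)+\delta$. Letting $\delta\to 0$ gives $\mu_{(q;r)}^{m+}(u)\le\pi(u)$.

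\textbf{Lower bound.} Fix $x_j^*\in B_{E_j^*}$, $y^*\in B_{F^*}$, and an arbitrary representation $u=\sum_i\lambda_i x_i^1\otimes\cdots\otimes x_i^m\otimes y_i$. From $|\langle a^*,a\rangle|\le\langle|a^*|,|a|\rangle$ applied slot by slot one obtains
$$|\langle u,x_1^*\otimes\cdots\otimes x_m^*\otimes y^*\rangle|\le\sum_i|\lambda_i|\,(|x_1^*|\otimes\cdots\otimes|x_m^*|)(|x_i^1|,\ldots,|x_i^m|)\,\langle|y^*|,|y_i|\rangle.$$
Applying H\"older with exponents $p^*,q,r$ factorizes this upper bound into a product of three sums. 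The decisive observation is that $|x_1^*|\otimes\cdots\otimes|x_m^*|$ is positive with $\mathcal L^r$-norm $\|x_1^*\|\cdots\|x_m^*\|\le 1$, hence lies in $B_{\mathcal L^r(E_1,\ldots,E_m)}^+$; likewise $|y^*|\in B_{F^{*+}}$. Therefore the middle and right factors are dominated respectively by $\|(x_i^1\otimes\cdots\otimes x_i^m)\|_{\ell_{q,|w|}^n(\widehat{E}_{|\pi|})}$ and $\|(y_i)\|_{\ell_{r,|w|}^n(F)}$. Taking the infimum over representations yields $|\langle u,x_1^*\otimes\cdots\otimes x_m^*\otimes y^*\rangle|\le\mu_{(q;r)}^{m+}(u)$, and the supremum over unit duals gives $\varepsilon(u)\le\mu_{(q;r)}^{m+}(u)$.

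\textbf{Main obstacle.} Nothing technically deep arises; the only point requiring care is confirming that the test element $|x_1^*|\otimes\cdots\otimes|x_m^*|$ indeed belongs to the positive unit ball of $\mathcal L^r(E_1,\ldots,E_m)$. This rests on the lattice identity $|x_1^*\otimes\cdots\otimes x_m^*|=|x_1^*|\otimes\cdots\otimes|x_m^*|$ together with the isometric norm formula $\|x_1^*\otimes\cdots\otimes x_m^*\|_{\mathcal L^r}=\|x_1^*\|\cdots\|x_m^*\|$ recorded in the preliminaries. Everything else reduces to H\"older's inequality and the exponent identity $\tfrac{1}{p^*}+\tfrac{1}{q}+\tfrac{1}{r}=1$.
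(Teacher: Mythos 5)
Your proposal is correct and follows essentially the same route as the paper: the upper bound via the same redistribution of $s_i^{1/p^\ast}s_i^{1/q}s_i^{1/r}$ across the scalar, tensor, and $F$-slots of a (near-optimal) projective representation, and the lower bound via testing against elementary functionals, passing to moduli, and applying H\"older with exponents $p^\ast,q,r$ before taking the infimum over representations. The only cosmetic difference is that you use $|x_1^\ast|\otimes\cdots\otimes|x_m^\ast|\in B_{\mathcal L^r(E_1,\ldots,E_m)}^{+}$ directly, while the paper first passes through a general $\varphi\in B_{\mathcal L^r}$ and then replaces it by $|\varphi|$; both rest on the same norm identity $\Vert x_1^\ast\otimes\cdots\otimes x_m^\ast\Vert_{\mathcal L^r}=\Vert x_1^\ast\Vert\cdots\Vert x_m^\ast\Vert$.
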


\begin{proof}
Let us prove the right-hand inequality in $\left( \ref{3.4}\right) $. We have%
\begin{equation*}
\mu _{\left( q;r\right) }^{m+}\left( u\right) \leq \left\Vert \left( \lambda
_{i}\right) _{i=1}^{n}\right\Vert _{\ell _{p^{\ast }}^{n}}\left\Vert
(x_{i}^{1}\otimes \cdots \otimes x_{i}^{m})_{i=1}^{n}\right\Vert _{\ell
_{q}^{n}\left( \widehat{E}_{\left\vert \pi \right\vert }\right) }\left\Vert
\left( y_{k}\right) _{k=1}^{n_{1}}\right\Vert _{\ell _{r}^{n}\left( F\right)
}.
\end{equation*}%
For each $i$, we set%
\begin{eqnarray*}
\lambda _{i} &=&\lambda _{i}\frac{(\left\vert \lambda _{i}\right\vert
\left\Vert y_{i}\right\Vert \left\Vert x_{i}^{1}\otimes \cdots \otimes
x_{i}^{m}\right\Vert )^{\frac{1}{p^{\ast }}}}{\left\vert \lambda
_{i}\right\vert } \\
x_{1}^{i}\otimes \cdots \otimes x_{m}^{i} &=&x_{1}^{i}\otimes \cdots \otimes
x_{m}^{i}\frac{(\left\vert \lambda _{i}\right\vert \left\Vert
y_{i}\right\Vert \left\Vert x_{i}^{1}\otimes \cdots \otimes
x_{i}^{m}\right\Vert )^{\frac{1}{q}}}{\left\Vert x_{i}^{1}\otimes \cdots
\otimes x_{i}^{m}\right\Vert } \\
y_{i} &=&y_{i}\frac{(\left\vert \lambda _{i}\right\vert \left\Vert
y_{i}\right\Vert \left\Vert x_{i}^{1}\otimes \cdots \otimes
x_{i}^{m}\right\Vert )^{\frac{1}{r}}}{\left\Vert y_{i}\right\Vert }.
\end{eqnarray*}%
Substituting these expressions into the above inequality and taking the
infimum over all representations of $u$ of the form $\left( \ref{3.3}\right) 
$, we obtain%
\begin{eqnarray*}
\mu _{\left( q;r\right) }^{m+}\left( u\right)  &\leq
&\sum\limits_{i=1}^{n}\left\vert \lambda _{i}\right\vert \left\Vert
y_{i}\right\Vert \left\Vert x_{i}^{1}\otimes \cdots \otimes
x_{i}^{m}\right\Vert  \\
&\leq &\sum\limits_{i=1}^{n}\left\vert \lambda _{i}\right\vert \left\Vert
y_{i}\right\Vert \left\Vert x_{i}^{1}\right\Vert \cdots \left\Vert
x_{i}^{m}\right\Vert .
\end{eqnarray*}%
Hence,%
\begin{equation*}
\mu _{\left( q;r\right) }^{m+}\left( u\right) \leq \pi \left( u\right) .
\end{equation*}%
For the left inequality in $\left( \ref{3.4}\right) $, we have%
\begin{eqnarray*}
\varepsilon \left( u\right)  &=&\sup_{\substack{ x_{j}^{\ast }\in
B_{E_{j}^{\ast }},y^{\ast }\in B_{F^{\ast }} \\ 1\leq j\leq m}}\left\{
\left\vert \underset{i=1}{\overset{n}{\sum }}\lambda _{i}x_{1}^{\ast }\left(
x_{i}^{1}\right) \cdots x_{m}^{\ast }\left( x_{i}^{m}\right) y^{\ast }\left(
y_{i}\right) \right\vert \right\}  \\
&=&\sup_{\substack{ x_{j}^{\ast }\in B_{E_{j}^{\ast }},y^{\ast }\in
B_{F^{\ast }} \\ 1\leq j\leq m}}\left\{ \left\vert \underset{i=1}{\overset{n}%
{\sum }}\lambda _{i}x_{1}^{\ast }\otimes \cdots\otimes x_{m}^{\ast }\left(
x_{i}^{1},\ldots,x_{i}^{m}\right) y^{\ast }\left( y_{i}\right) \right\vert
\right\}. 
\end{eqnarray*}%
Since $x_{1}^{\ast }\otimes \cdots \otimes x_{m}^{\ast }\in B_{\mathcal{L}%
^{r}\left( E_{1},\ldots ,E_{m}\right) },$ and taking absolute values inside,
we get%
\begin{equation*}
\varepsilon \left( u\right) \leq \sup_{\varphi \in B_{\mathcal{L}^{r}\left(
E_{1},\ldots ,E_{m}\right) },y^{\ast }\in B_{F^{\ast }}}\left\{ \underset{i=1%
}{\overset{n}{\sum }}\left\vert \lambda _{i}\right\vert \left\vert \varphi
\left( x_{1}^{i},\ldots,x_{m}^{m}\right) \right\vert \left\vert y^{\ast }\left(
y_{i}\right) \right\vert \right\} .
\end{equation*}%
Since $\left\vert \varphi \right\vert \in B_{\mathcal{L}^{r}\left(
E_{1},\ldots ,E_{m}\right) }^{+}$ and $\left\vert y^{\ast }\right\vert \in
B_{F^{\ast }}^{+},$ we further obtain%
\begin{equation*}
\varepsilon \left( u\right) \leq \sup_{\varphi \in B_{\mathcal{L}^{r}\left(
E_{1},\ldots ,E_{m}\right) }^{+},y^{\ast }\in B_{F^{\ast +}}}\left\{ 
\underset{i=1}{\overset{n}{\sum }}\left\vert \lambda _{i}\right\vert \varphi
\left( \left\vert x_{i}^{1}\right\vert ,\ldots,\left\vert x_{i}^{m}\right\vert
\right) y^{\ast }\left( \left\vert y_{i}\right\vert \right) \right\} .
\end{equation*}%
Applying H\"{o}lder's inequality%
\begin{equation*}
\varepsilon \left( u\right) \leq \left\Vert \left( \lambda _{i}\right)
_{i=1}^{n}\right\Vert _{\ell _{p^{\ast }}^{n}}\sup_{\varphi \in B_{\mathcal{L%
}^{r}\left( E_{1},\ldots ,E_{m}\right) }^{+}}(\sum\limits_{i=1}^{n}\varphi
\left( \left\vert x_{i}^{1}\right\vert ,\ldots,\left\vert x_{i}^{m}\right\vert
\right) ^{p})^{\frac{1}{p}}\left\Vert \left( y_{i}\right)
_{i=1}^{n}\right\Vert _{\ell _{r,\left\vert w\right\vert }^{n}\left(
F\right) }.
\end{equation*}%
Finally, taking the infimum over all representations of $u$ of the form $%
\left( \ref{3.3}\right) $ yields%
\begin{equation*}
\varepsilon \left( u\right) \leq \mu _{\left( q;r\right) }^{m+}\left(
u\right) .
\end{equation*}
\end{proof}

We denote by $E_{1}\widehat{\otimes }_{\mu _{\left( q;r\right) }^{m+}}\cdots%
\widehat{\otimes }_{\mu _{\left( q;r\right) }^{m+}}E_{m}\widehat{\otimes }%
_{\mu _{\left( q;r\right) }^{m+}}F$ the completed of $E_{1}\otimes
\cdots\otimes E_{m}\otimes F$ for the norm $\mu _{\left( q;r\right) }^{m+}$.
The main result of this section is the following identification.

\begin{proposition}
\textit{Let }$1\leq p,q,r\leq \infty $ \textit{such that} $\frac{1}{p}=\frac{%
1}{q}+\frac{1}{r}.$ \textit{We have the following isometric identification}%
\begin{equation*}
\mathcal{L}_{w,\left( q,r\right) }^{m+}\left( E_{1},\ldots,E_{m};F\right)
=(E_{1}\widehat{\otimes }_{\mu _{\left( q;r\right) }^{m+}}\cdots \widehat{%
\otimes }_{\mu _{\left( q;r\right) }^{m+}}E_{m}\widehat{\otimes }_{\mu
_{\left( q;r\right) }^{m+}}F^{\ast })^{\ast }.
\end{equation*}
\end{proposition}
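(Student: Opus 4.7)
The plan is to construct mutually inverse linear isometries between $\mathcal{L}_{w,(q,r)}^{m+}(E_{1},\ldots,E_{m};F)$ and the dual $(E_{1}\widehat{\otimes}_{\mu_{(q;r)}^{m+}}\cdots\widehat{\otimes}_{\mu_{(q;r)}^{m+}}E_{m}\widehat{\otimes}_{\mu_{(q;r)}^{m+}}F^{\ast})^{\ast}$. First, given $T\in\mathcal{L}_{w,(q,r)}^{m+}(E_{1},\ldots,E_{m};F)$, I would define a linear form $\Phi(T)$ on the algebraic tensor product $E_{1}\otimes\cdots\otimes E_{m}\otimes F^{\ast}$ by
$\Phi(T)(\sum_{i}\lambda_{i}x_{i}^{1}\otimes\cdots\otimes x_{i}^{m}\otimes y_{i}^{\ast})=\sum_{i}\lambda_{i}\langle T(x_{i}^{1},\ldots,x_{i}^{m}),y_{i}^{\ast}\rangle$;
the multilinearity of $T$ together with Lemma \ref{Lemma1} (applied to the $(m+1)$-st slot occupied by $F^{\ast}$) ensures independence of the chosen representation.

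Next, I would bound $|\Phi(T)(u)|$ for an arbitrary representation of $u$. Applying H\"older's inequality with exponents $p^{\ast}$ and $p$, followed by the equivalent characterization of positive weakly $(q,r)$-dominated operators from Proposition \ref{Propo1}, gives
\begin{align*}
|\Phi(T)(u)|&\leq\|(\lambda_{i})\|_{p^{\ast}}\Bigl(\sum_{i}|\langle T(x_{i}^{1},\ldots,x_{i}^{m}),y_{i}^{\ast}\rangle|^{p}\Bigr)^{1/p}\\
&\leq\|(\lambda_{i})\|_{p^{\ast}}\,d_{w,(q;r)}^{m+}(T)\sup_{\varphi\in B_{\mathcal{L}^{r}(E_{1},\ldots,E_{m})}^{+}}\Bigl(\sum_{i}\varphi(|x_{i}^{1}|,\ldots,|x_{i}^{m}|)^{q}\Bigr)^{1/q}\|(y_{i}^{\ast})\|_{r,|w|}.
\end{align*}
Using the isometric identification $\mathcal{L}^{r}(E_{1},\ldots,E_{m})=(\widehat{E}_{|\pi|})^{\ast}$ recalled in the preliminaries, the middle factor equals $\|(x_{i}^{1}\otimes\cdots\otimes x_{i}^{m})\|_{\ell_{q,|w|}^{n}(\widehat{E}_{|\pi|})}$. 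Taking the infimum over all representations yields $|\Phi(T)(u)|\leq d_{w,(q;r)}^{m+}(T)\,\mu_{(q;r)}^{m+}(u)$, so $\Phi(T)$ extends by density to a continuous functional on the completion with $\|\Phi(T)\|\leq d_{w,(q;r)}^{m+}(T)$.

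For the reverse direction, I would fix a continuous linear functional $\phi$ on the completion and define $T$ by $\langle T(x^{1},\ldots,x^{m}),y^{\ast}\rangle=\phi(x^{1}\otimes\cdots\otimes x^{m}\otimes y^{\ast})$; since $\mu_{(q;r)}^{m+}\geq\varepsilon$, $\phi$ is in particular $\varepsilon$-continuous, so the assignment is multilinear and bounded. To recover the dominated inequality, I fix positive input sequences $(x_{i}^{j})\subset E_{j}^{+}$ and $(y_{i}^{\ast})\subset F^{\ast+}$ and select $(\lambda_{i})\in B_{\ell_{p^{\ast}}^{n}}$ that is norming for $(\langle T(x_{i}^{1},\ldots,x_{i}^{m}),y_{i}^{\ast}\rangle)$ in $\ell_{p}^{n}$ by the $\ell_{p}$--$\ell_{p^{\ast}}$ duality; applying $\phi$ to $u=\sum_{i}\lambda_{i}x_{i}^{1}\otimes\cdots\otimes x_{i}^{m}\otimes y_{i}^{\ast}$ and using the very definition of $\mu_{(q;r)}^{m+}$, combined with positivity (so that $|x_{i}^{j}|=x_{i}^{j}$ and $|y_{i}^{\ast}|=y_{i}^{\ast}$), yields $\|(\langle T(x_{i}^{1},\ldots,x_{i}^{m}),y_{i}^{\ast}\rangle)\|_{p}\leq\|\phi\|\sup_{\varphi}(\sum_{i}\varphi(x_{i}^{1},\ldots,x_{i}^{m})^{q})^{1/q}\|(y_{i}^{\ast})\|_{r,|w|}$, hence $d_{w,(q;r)}^{m+}(T)\leq\|\phi\|$.

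The main obstacle I anticipate lies in the reverse direction: a priori $\phi$ only produces $T$ with values in $F^{\ast\ast}$, so ensuring $T\in\mathcal{L}(E_{1},\ldots,E_{m};F)$ requires a weak-$\ast$ continuity argument on the slice $y^{\ast}\mapsto\phi(x^{1}\otimes\cdots\otimes x^{m}\otimes y^{\ast})$, or an interpretation of the identification modulo the canonical embedding $F\hookrightarrow F^{\ast\ast}$ in the ideal framework adopted throughout the paper. Once this technical point is settled, combining the two estimates $\|\Phi(T)\|\leq d_{w,(q;r)}^{m+}(T)$ and $d_{w,(q;r)}^{m+}(\Phi^{-1}(\phi))\leq\|\phi\|$ with the tautological identity $\Phi(T)=\phi$ on elementary tensors delivers the desired isometric identification.
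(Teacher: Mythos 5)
Your proposal follows essentially the same route as the paper's proof: the same functional $T\mapsto\Psi_{T}$ with H\"older plus Proposition \ref{Propo1} giving $\left\Vert \Psi _{T}\right\Vert \leq d_{w,\left( q,r\right) }^{m+}(T)$, and the same converse via evaluation on elementary tensors and the $\ell_{p}$--$\ell_{p^{\ast}}$ duality against the definition of $\mu_{(q;r)}^{m+}$, giving $d_{w,\left( q,r\right) }^{m+}(B(\Psi ))\leq \left\Vert \Psi \right\Vert$. The one obstacle you flag --- that the converse a priori yields an operator with values in $F^{\ast\ast}$ rather than $F$ --- is not treated in the paper either, which simply asserts $B(\Psi )\in \mathcal{L}\left( E_{1},\ldots,E_{m};F\right)$, so your attempt is faithful to (and, in noting this point, slightly more careful than) the paper's argument.
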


\begin{proof}
Let $T\in \mathcal{L}_{w,\left( q,r\right) }^{m+}\left( E,\ldots,E_{m};F\right)
.$ We define a linear functional on $E_{1}\otimes \cdots\otimes E_{m}\otimes
F^{\ast }$ by%
\begin{equation*}
\Psi _{T}\left( u\right) =\sum\limits_{i=1}^{n}\lambda _{i}\left\langle
T\left( x_{i}^{1},\ldots,x_{i}^{m}\right) ,y_{i}^{\ast }\right\rangle ,
\end{equation*}%
where $u=\sum\limits_{i=1}^{n}\lambda _{i}x_{i}^{1}\otimes \cdots \otimes
x_{i}^{m}\otimes y_{i}^{\ast }$. Then, by H\"{o}lder's inequality, we have%
\begin{eqnarray*}
\left\vert \Psi _{T}\left( u\right) \right\vert  &=&\left\vert
\sum\limits_{i=1}^{n}\lambda _{i}\left\langle T\left(
x_{i}^{1},\ldots,x_{i}^{m}\right) ,y_{i}^{\ast }\right\rangle \right\vert  \\
&\leq &\left\Vert \left( \lambda _{i}\right) \right\Vert _{\ell _{p^{\ast
}}^{n}}(\sum\limits_{i=1}^{n}\left\vert \left\langle T\left(
x_{i}^{1},\ldots,x_{i}^{m}\right) ,y_{i}^{\ast }\right\rangle \right\vert
^{p})^{\frac{1}{p}}.
\end{eqnarray*}%
Since $T$ is positive weakly $\left( q,r\right) $-domintaed, we get%
\begin{equation*}
\left\vert \Psi _{T}\left( u\right) \right\vert \leq d_{w,\left( q,r\right)
}^{m+}(T)\left\Vert \left( \lambda _{i}\right) \right\Vert _{\ell _{p^{\ast
}}^{n}}\left\Vert (x_{i}^{1}\otimes \cdots \otimes x_{i}^{m})\right\Vert
_{\ell _{q,\left\vert w\right\vert }^{n}\left( \widehat{E}_{\left\vert \pi
\right\vert }\right) }\left\Vert \left( y_{i}^{\ast }\right) \right\Vert
_{\ell _{r,\left\vert w\right\vert }^{n}\left( F^{\ast }\right) }.
\end{equation*}%
Hence, as $u$ is arbitrary, $\Psi _{T}$ is $\mu _{\left( q,r\right) }^{m+}$%
-continuous on $E_{1}\otimes \cdots\otimes E_{m}\otimes F^{\ast },$ and extends
continuously to the completed tensor product $E_{1}\widehat{\otimes }_{\mu
_{\left( q;r\right) }^{m+}}\cdots \widehat{\otimes }_{\mu _{\left(
q;r\right) }^{m+}}E_{m}\widehat{\otimes }_{\mu _{\left( q;r\right)
}^{m+}}F^{\ast }$\ with 
\begin{equation*}
\left\Vert \Psi _{T}\right\Vert \leq d_{w,\left( q,r\right) }^{m+}(T).
\end{equation*}%
Conversely, let $\Psi \in (E_{1}\widehat{\otimes }_{\mu _{\left( q;r\right)
}^{m+}}\cdots \widehat{\otimes }_{\mu _{\left( q;r\right) }^{m+}}E_{m}%
\widehat{\otimes }_{\mu _{\left( q;r\right) }^{m+}}F^{\ast })^{\ast }$. We
consider the mapping $B(\Psi )$ defined by%
\begin{equation*}
B(\Psi )\left( x^{1},\ldots,x^{m}\right) \left( y^{\ast }\right) =\Psi \left(
x^{1}\otimes \cdots \otimes x^{m}\otimes y^{\ast }\right) 
\end{equation*}%
It is clear that $B(\Psi )\in \mathcal{L}\left( E_{1},\ldots,E_{m};F\right) .$
Let $\left( x_{i}^{j}\right) _{i=1}^{n}\subset E_{j},$\ ($j=1,\ldots,m$)\ and $%
y_{1}^{\ast },\ldots,y_{n_{1}}^{\ast }\in Y^{\ast }$, we have%
\begin{eqnarray*}
&&\left\vert \sum\limits_{i=1}^{n}\left\langle B(\Psi )\left(
x_{i}^{1},\ldots,x_{i}^{m}\right) ,y_{i}^{\ast }\right\rangle \right\vert  \\
&=&\left\vert \left\langle \Psi (\sum\limits_{i=1}^{n}x_{i}^{1}\otimes
\cdots \otimes x_{i}^{m}\otimes y_{i}^{\ast }\right\rangle \right\vert  \\
&\leq &\left\Vert \Psi \right\Vert \mu _{\left( q,r\right)
}^{m+}(\sum\limits_{i=1}^{n}x_{i}^{1}\otimes \cdots \otimes
x_{i}^{m}\otimes y_{i}^{\ast }).
\end{eqnarray*}%
Therefore,%
\begin{eqnarray*}
&&(\sum\limits_{i=1}^{n}\left\vert \left\langle B(\Psi )\left(
x_{i}^{1},\ldots,x_{i}^{m}\right) ,y_{i}^{\ast }\right\rangle \right\vert
^{p})^{\frac{1}{p}} \\
&=&\sup_{\left\Vert \left( \lambda _{i}\right) \right\Vert _{\ell _{p^{\ast
}}^{n}}\leq 1}(\left\vert \sum\limits_{i=1}^{n}\lambda _{i}\left\langle
B(\Psi )\left( x_{i}^{1},\ldots,x_{i}^{m}\right) ,y_{i}^{\ast }\right\rangle
\right\vert ) \\
&=&\sup_{\left\Vert \left( \lambda _{i}\right) \right\Vert _{\ell _{p^{\ast
}}^{n}}\leq 1}(\left\vert \Psi (\sum\limits_{i=1}^{n}\lambda
_{i}x_{i}^{1}\otimes \cdots \otimes x_{i}^{m}\otimes y_{i}^{\ast
})\right\vert ) \\
&\leq &\sup_{\left\Vert \left( \lambda _{i}\right) \right\Vert _{\ell
_{p^{\ast }}^{n}}\leq 1}\left\Vert \Psi \right\Vert \left\Vert \left(
\lambda _{i}\right) \right\Vert _{\ell _{p^{\ast }}^{n_{1}}}\left\Vert
(x_{i}^{1}\otimes \cdots \otimes x_{i}^{m})\right\Vert _{\ell _{q,\left\vert
w\right\vert }^{n}\left( \widehat{E}_{\left\vert \pi \right\vert }\right)
}\left\Vert \left( y_{k}^{\ast }\right) \right\Vert _{\ell _{r,\left\vert
w\right\vert }^{n}\left( F^{\ast }\right) } \\
&\leq &\left\Vert \Psi \right\Vert \left\Vert (x_{i}^{1}\otimes \cdots
\otimes x_{i}^{m})\right\Vert _{\ell _{q,\left\vert w\right\vert }^{n}\left( 
\widehat{E}_{\left\vert \pi \right\vert }\right) }\left\Vert \left(
y_{k}^{\ast }\right) \right\Vert _{\ell _{r,\left\vert w\right\vert
}^{n}\left( F^{\ast }\right) }.
\end{eqnarray*}%
Hence, $B(\Psi )$ is positive weakly $\left( q,r\right) $-dominted and 
\begin{equation*}
d_{w,\left( q,r\right) }^{m+}\left( B(\Psi )\right) \leq \left\Vert \Psi
\right\Vert .
\end{equation*}
\end{proof}

\subsection{Polynomial case}

The polynomial case differs from the multilinear case in essential details;
it does not directly follow from the multilinear setting, and the proof
steps must be revisited to handle this situation. Let $E$ and $F$ be Banach
lattice. Let $1\leq p,q,r\leq \infty .$ Consider $u\in \left( \otimes
_{s,\left\vert \pi \right\vert }^{m}E\right) \otimes F$ of the form%
\begin{equation}
u=\sum\limits_{i=1}^{n}\lambda _{i}x_{i}\otimes \overset{\left( m\right) }{%
\cdots }\otimes x_{i}\otimes y_{i},  \label{1234}
\end{equation}%
where $\lambda _{i}\in \mathbb{R},x_{i}\in E$ and $y_{i}\in F\left( 1\leq
i\leq n\right) .$ This representation of $u$ can be considered a general
form, as any other representation can be rewritten in this way. Define%
\begin{equation*}
\lambda _{\left( q,r\right) }^{m+}\left( u\right) =\inf \left\{ \left\Vert
\left( \lambda _{i}\right) \right\Vert _{\ell _{p^{\ast }}^{n}}\sup_{\phi
\in B_{\mathcal{P}^{r}\left( ^{m}E\right) }^{+}}\left\Vert \left( \phi
\left( \left\vert x_{i}\right\vert \right) \right) \right\Vert _{\ell
_{q}^{n}}\left\Vert \left( y_{i}\right) \right\Vert _{\ell _{r,\left\vert
w\right\vert }^{n}\left( F\right) }\right\} ,
\end{equation*}%
where the infimum is taken over all general representations of $u$ of the
form $\left( \ref{1234}\right) .$

\begin{proposition}
For every $u$ of the form $\left( \ref{1234}\right) ,$ we have%
\begin{equation*}
\lambda _{\left( q,r\right) }^{m+}\left( u\right) =\mu _{\left( q,r\right)
}^{1+}\left( u\right) .
\end{equation*}
\end{proposition}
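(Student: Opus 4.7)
I will establish $\mu_{(q,r)}^{1+}(u)=\lambda_{(q,r)}^{m+}(u)$ by proving the two inequalities separately. For $\mu_{(q,r)}^{1+}(u)\le\lambda_{(q,r)}^{m+}(u)$, I observe that every representation $u=\sum_{i=1}^{n}\lambda_{i}\,x_{i}\otimes\cdots\otimes x_{i}\otimes y_{i}$ admissible in the $\lambda^{m+}$-infimum becomes admissible in the $\mu^{1+}$-infimum for the Banach lattice $\widehat{\otimes}_{s,|\pi|}^{m}E$ upon setting $z_{i}:=x_{i}\otimes\cdots\otimes x_{i}$. To see that the two corresponding norm quantities coincide I will use the isometric lattice isomorphism $\mathcal{P}^{r}(^{m}E)\cong(\widehat{\otimes}_{s,|\pi|}^{m}E)^{\ast}$ recalled in the preliminaries, under which $B_{\mathcal{P}^{r}(^{m}E)}^{+}$ is carried onto $B_{(\widehat{\otimes}_{s,|\pi|}^{m}E)^{\ast}}^{+}$ via $\phi\leftrightarrow\tilde\phi$ with $\tilde\phi(x\otimes\cdots\otimes x)=\phi(x)$, together with the Fremlin identity $|x\otimes\cdots\otimes x|=|x|\otimes\cdots\otimes|x|$, to deduce
\[
\|(x_{i}\otimes\cdots\otimes x_{i})_{i=1}^{n}\|_{\ell_{q,|w|}^{n}(\widehat{\otimes}_{s,|\pi|}^{m}E)}=\sup_{\phi\in B_{\mathcal{P}^{r}(^{m}E)}^{+}}\Big(\sum_{i=1}^{n}\phi(|x_{i}|)^{q}\Big)^{1/q}.
\]
Taking infima then gives $\mu^{1+}(u)\le\lambda^{m+}(u)$.

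For the reverse inequality, I will convert an arbitrary $\mu^{1+}$-representation of $u$ into a diagonal representation with comparable $\lambda$-quantity. Starting from $u=\sum_{j=1}^{N}\alpha_{j}\,z_{j}\otimes w_{j}$ with $z_{j}\in\widehat{\otimes}_{s,|\pi|}^{m}E$ chosen so that its $\mu^{1+}$-quantity is within $\varepsilon$ of $\mu^{1+}(u)$, I first use density of the algebraic symmetric tensor product $\otimes_{s}^{m}E$ in $\widehat{\otimes}_{s,|\pi|}^{m}E$ to replace each $z_{j}$ by some $\tilde{z}_{j}\in\otimes_{s}^{m}E$, absorbing the residual error into $\varepsilon$; next I use the polarization formula to write each $\tilde{z}_{j}=\sum_{k}\gamma_{jk}\,\xi_{jk}^{\otimes m}$ as a finite linear combination of diagonal tensors; I then absorb positive polarization coefficients into the vectors by the identity $\gamma\,\xi^{\otimes m}=(\gamma^{1/m}\xi)^{\otimes m}$, keeping signs in the scalars $\alpha_{j}$ (or in the vectors $w_{j}$) when $m$ is even and $\gamma<0$; finally, I apply the three-factor rescaling of coefficients, vectors, and $w_{j}$'s that appeared in the previous proof that $\mu_{(q,r)}^{m+}$ is a tensor norm, balancing the three factors in the product through the H\"older relation $\frac{1}{p^{\ast}}+\frac{1}{q}+\frac{1}{r}=1$. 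Letting $\varepsilon\to 0$ then yields $\lambda^{m+}(u)\le\mu^{1+}(u)$.

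\textbf{Main obstacle.} The second inequality is the technical heart of the proof. Polarizing each $z_{j}$ into $K_{j}$ diagonal terms duplicates $w_{j}$ across the $K_{j}$ resulting copies and fragments the scalar $\alpha_{j}$, so that a naive bound inflates all three summability norms by combinatorial factors depending on the $K_{j}$'s; the delicate point is to carry out the rescaling so that these multiplicities cancel against each other through the H\"older exponents $p^{\ast},q,r$, leaving the final $\lambda^{m+}$-quantity dominated by the original $\mu^{1+}$-quantity up to $O(\varepsilon)$. Handling the negative polarization coefficients when $m$ is even, and simultaneously controlling the density approximation with the polarization expansion, are further subtleties that must be managed.
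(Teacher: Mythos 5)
Your first inequality, $\mu_{(q,r)}^{1+}(u)\leq\lambda_{(q,r)}^{m+}(u)$, is correct and is in substance the paper's entire proof: for a diagonal representation of the form (\ref{1234}) the middle factor is rewritten, via the lattice $m$-morphism identity $\left\vert x\otimes\cdots\otimes x\right\vert=\left\vert x\right\vert\otimes\cdots\otimes\left\vert x\right\vert$ and the identification $\mathcal{P}^{r}\left(^{m}E\right)=\left(\widehat{\otimes}_{s,\left\vert\pi\right\vert}^{m}E\right)^{\ast}$, as $\sup_{\phi\in B_{\mathcal{P}^{r}\left(^{m}E\right)}^{+}}(\sum_{i}\phi(\left\vert x_{i}\right\vert)^{q})^{1/q}$, so the two quantities agree representation by representation. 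The genuine gap is your second inequality, $\lambda_{(q,r)}^{m+}(u)\leq\mu_{(q,r)}^{1+}(u)$, which you only outline and whose crux you yourself concede is unresolved. As described, the scheme does not close: polarizing each $z_{j}$ into $K_{j}$ diagonal terms duplicates $w_{j}$ (inflating $\left\Vert(w_{j})\right\Vert_{r,\left\vert w\right\vert}$ by factors of order $K_{j}^{1/r}$), fragments the scalars (inflating the $\ell_{p^{\ast}}$ factor), and introduces polarization constants; the three-factor rescaling you invoke is designed to balance the factors of one fixed representation, and you give no mechanism by which the H\"older relation $1/p^{\ast}+1/q+1/r=1$ makes these multiplicities cancel so that the new diagonal representation has product $\leq$ the old one. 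The density step has a further problem: replacing $z_{j}$ by $\tilde{z}_{j}\in\otimes_{s}^{m}E$ changes $u$, and to absorb the error you must bound the $\lambda^{m+}$-quantity of $u-\tilde{u}$ by its small $\mu^{1+}$/projective size, which is essentially the inequality being proved. So as a self-contained argument the hard direction is missing.

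For comparison, the paper does not attempt that direction at all: relying on its convention, stated just before (\ref{1234}), that diagonal representations are ``a general form,'' it reads the infimum defining $\mu_{(q,r)}^{1+}(u)$ as ranging over representations of the form (\ref{1234}), and then the proposition reduces to the one-line rewriting that constitutes your easy direction. If, as your reading requires (and as the subsequent corollary transferring the tensor-norm properties really needs), $\mu^{1+}$ is the $m=1$ norm on $(\widehat{\otimes}_{s,\left\vert\pi\right\vert}^{m}E)\otimes F$ with arbitrary, not necessarily diagonal, first components, then $\lambda^{m+}\leq\mu^{1+}$ is an additional claim that neither your sketch nor the paper settles; a more promising route than polarization would be a duality/bipolar argument showing both norms have the same dual unit ball, or a direct proof that diagonal representations are norming for the $\mu^{1+}$-infimum.
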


\begin{proof}
Let $u$ of the form $\left( \ref{1234}\right) .$ We have%
\begin{eqnarray*}
\mu _{\left( q,r\right) }^{1+}\left( u\right)  &=&\inf \left\{ \left\Vert
\left( \lambda _{i}\right) \right\Vert _{\ell _{p^{\ast }}^{n}}\left\Vert
(x_{i}\otimes \overset{\left( m\right) }{\cdots }\otimes x_{i})\right\Vert
_{\ell _{q,\left\vert w\right\vert }^{n}\left( \widehat{\otimes }%
_{s,\left\vert \pi \right\vert }^{m}E\right) }\left\Vert \left( y_{i}\right)
\right\Vert _{\ell _{r,\left\vert w\right\vert }^{n}\left( F\right)
}\right\}  \\
&=&\inf \left\{ \left\Vert \left( \lambda _{i}\right) \right\Vert _{\ell
_{p^{\ast }}^{n}}\sup_{\phi \in B_{\left( \widehat{\otimes }_{s,\left\vert
\pi \right\vert }^{m}E\right) ^{\ast }}^{+}}(\sum\limits_{i=1}^{n}\phi
\left( \left\vert x_{i}\otimes \overset{\left( m\right) }{\cdots }\otimes
x_{i}\right\vert \right) ^{q})^{\frac{1}{q}}\left\Vert \left( y_{i}\right)
\right\Vert _{\ell _{r,\left\vert w\right\vert }^{n}\left( F\right)
}\right\}  \\
&=&\inf \left\{ \left\Vert \left( \lambda _{i}\right) \right\Vert _{\ell
_{p^{\ast }}^{n}}\sup_{\phi \in B_{\left( \widehat{\otimes }_{s,\left\vert
\pi \right\vert }^{m}E\right) ^{\ast }}^{+}}(\sum\limits_{i=1}^{n}\phi
\left( \left\vert x_{i}\right\vert \otimes \overset{\left( m\right) }{\cdots 
}\otimes \left\vert x_{i}\right\vert \right) ^{q})^{\frac{1}{q}}\left\Vert
\left( y_{i}\right) \right\Vert _{\ell _{r,\left\vert w\right\vert
}^{n}\left( F\right) }\right\} .
\end{eqnarray*}%
Since $\mathcal{P}^{r}\left( ^{m}E\right) =\left( \widehat{\otimes }%
_{s,\left\vert \pi \right\vert }^{m}E\right) ^{\ast }$, we get%
\begin{eqnarray*}
\mu _{\left( q;r\right) }^{1+}\left( u\right)  &=&\inf \left\{ \left\Vert
\left( \lambda _{i}\right) \right\Vert _{\ell _{p^{\ast }}^{n}}\sup_{\phi
\in B_{\mathcal{P}^{r}\left( ^{m}E\right) }^{+}}\left\Vert \left( \phi
\left( \left\vert x_{i}\right\vert \right) \right) \right\Vert _{\ell
_{q}^{n}}\left\Vert \left( y_{i}\right) \right\Vert _{\ell _{r,\left\vert
w\right\vert }^{n}\left( F\right) }\right\}  \\
&=&\lambda _{\left( q,r\right) }^{m+}\left( u\right) .
\end{eqnarray*}
\end{proof}

From the above discussion on the tensor norm $\mu _{\left( q,r\right)
}^{m+}\left( u\right) $, and the previous proposition, we obtain the
following result.

\begin{corollary}
\textit{Let }$1\leq p,q,r\leq \infty $ \textit{such that} $\frac{1}{p}=\frac{%
1}{q}+\frac{1}{r}$ \textit{and} $m\in \mathbb{N}^{\ast }.$\textit{\ Then }$%
\lambda _{\left( q,r\right) }^{m+}$\textit{\ is a tensor norm on }$\left( 
\widehat{\otimes }_{s,\left\vert \pi \right\vert }^{m}E\right) \otimes F$
and we have%
\begin{equation*}
\varepsilon \leq \lambda _{\left( q,r\right) }^{m+}\leq \pi ,
\end{equation*}%
\textit{\ where }$\varepsilon $ and $\pi $ denote the injective and
projective norms on $E\otimes F,$ respectively.
\end{corollary}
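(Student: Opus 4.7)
The plan is to piggyback entirely on the preceding proposition, which asserts the pointwise equality $\lambda_{(q,r)}^{m+}(u) = \mu_{(q,r)}^{1+}(u)$ for every $u \in (\widehat{\otimes}_{s,|\pi|}^{m}E) \otimes F$. Since $\widehat{\otimes}_{s,|\pi|}^{m}E$ is itself a Banach lattice (as recalled in Section~1), the linear case $m=1$ of the multilinear machinery of the previous subsection applies verbatim to the pair $(\widehat{\otimes}_{s,|\pi|}^{m}E,\,F)$. Thus the map $\mu_{(q,r)}^{1+}$ on this pair is already known to be a reasonable tensor norm, and every tensor-norm property we need for $\lambda_{(q,r)}^{m+}$ transfers through the equality.

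Concretely, first I would make explicit that $\lambda_{(q,r)}^{m+}$ is well-defined (independent of the representation) by Lemma~\ref{Lemma1} applied in the case $m=1$ to $(\widehat{\otimes}_{s,|\pi|}^{m}E,\,F)$. Then homogeneity, triangle inequality and positive definiteness follow from the corresponding arguments already carried out for $\mu_{(q;r)}^{m+}$ (again specialised to $m=1$). Finally, the reasonable bound $\varepsilon \leq \lambda_{(q,r)}^{m+} \leq \pi$ is precisely the previous proposition for $\mu_{(q,r)}^{1+}$ on $(\widehat{\otimes}_{s,|\pi|}^{m}E) \otimes F$; here I would flag that the $\varepsilon$ and $\pi$ in the statement are intended as the injective and projective norms on $(\widehat{\otimes}_{s,|\pi|}^{m}E) \otimes F$, not on $E \otimes F$ as the statement literally reads.

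The one bookkeeping step that is worth making visible is the identification of the two positive unit balls used in the two definitions, namely $B_{\mathcal{P}^{r}(^{m}E)}^{+}$ (which appears in $\lambda_{(q,r)}^{m+}$) with $B_{(\widehat{\otimes}_{s,|\pi|}^{m}E)^{\ast}}^{+}$ (which appears in $\mu_{(q,r)}^{1+}$). This is furnished by the isometric lattice isomorphism $\mathcal{P}^{r}(^{m}E) = (\widehat{\otimes}_{s,|\pi|}^{m}E)^{\ast}$ recalled in the preliminaries, under which the canonical pairing satisfies $\phi(|x|) = \phi(|x \otimes \overset{(m)}{\cdots} \otimes x|)$ because $|x \otimes \overset{(m)}{\cdots} \otimes x| = |x| \otimes \overset{(m)}{\cdots} \otimes |x|$ in $\widehat{\otimes}_{s,|\pi|}^{m}E$.

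I do not expect a genuine obstacle here: the corollary is essentially a translation of the multilinear result via the linearisation dictionary $P \leftrightarrow P^{\otimes}$. The only mildly delicate moment is being careful that the ambient tensor product on which the injective and projective cross-norms are computed is $(\widehat{\otimes}_{s,|\pi|}^{m}E) \otimes F$, so that the preceding proposition applies directly rather than through an extra embedding step.
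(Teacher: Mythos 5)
Your proposal is correct and follows essentially the same route as the paper, which obtains the corollary by combining the preceding proposition $\lambda_{(q,r)}^{m+}=\mu_{(q,r)}^{1+}$ with the already established tensor-norm and reasonableness results for $\mu_{(q;r)}^{m+}$, specialised to $m=1$ on the pair $\left(\widehat{\otimes}_{s,\left\vert \pi \right\vert}^{m}E,\,F\right)$. Your extra remarks (the identification $B_{\mathcal{P}^{r}(^{m}E)}^{+}=B_{(\widehat{\otimes}_{s,\left\vert \pi \right\vert}^{m}E)^{\ast}}^{+}$ and the observation that $\varepsilon$ and $\pi$ should be read as norms on $\left(\widehat{\otimes}_{s,\left\vert \pi \right\vert}^{m}E\right)\otimes F$ rather than $E\otimes F$) are accurate clarifications of what the paper leaves implicit.
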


We denote by $\left( \widehat{\otimes }_{s,\left\vert \pi \right\vert
}^{m}E\right) \widehat{\otimes }_{\lambda _{\left( q;r\right) }^{m+}}F$ the
completed of $\left( \otimes _{s,\left\vert \pi \right\vert }^{m}E\right)
\otimes F$ for the norm $\lambda _{\left( q;r\right) }^{m+}$.

Now, the main result of this section is the following identification.

\begin{proposition}
\textit{Let }$1\leq p,q,r\leq \infty $ \textit{such that} $\frac{1}{p}=\frac{%
1}{q}+\frac{1}{r}.$ \textit{We have the following isometric identification}%
\begin{equation*}
\mathcal{P}_{w,\left( q,r\right) }^{m+}\left( ^{m}E;F\right) =(\left( 
\widehat{\otimes }_{s,\left\vert \pi \right\vert }^{m}E\right) \widehat{%
\otimes }_{\lambda _{\left( q,r\right) }^{m+}}F^{\ast })^{\ast }.
\end{equation*}
\end{proposition}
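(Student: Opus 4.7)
The plan is to parallel the proof of the preceding multilinear identification, adapting the argument to the polynomial setting by means of the correspondence $\mathcal{P}^{r}(^{m}E;F) \cong \mathcal{L}^{r}(\widehat{\otimes}_{s,|\pi|}^{m}E;F)$ together with the identity $\lambda_{(q,r)}^{m+} = \mu_{(q,r)}^{1+}$ just established. Two bounds---one on the linear form $\Psi_{P}$ induced by a positive weakly $(q,r)$-dominated polynomial $P$, and one on the polynomial $B(\Psi)$ induced by a continuous linear functional $\Psi$---will combine to give the isometric identification.

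For the forward direction, given $P \in \mathcal{P}_{w,(q,r)}^{m+}(^{m}E;F)$, I would define $\Psi_{P}$ on $(\otimes_{s,|\pi|}^{m}E) \otimes F^{*}$ by $\Psi_{P}(\sum_{i} \lambda_{i}\, x_{i} \otimes \cdots \otimes x_{i} \otimes y_{i}^{*}) = \sum_{i} \lambda_{i} \langle P(x_{i}), y_{i}^{*}\rangle$. Well-definedness follows from the uniqueness of the linearization $P^{\otimes}\colon \widehat{\otimes}_{s,|\pi|}^{m}E \to F$ of the regular polynomial $P$. Hölder's inequality with exponents $p^{*}$ and $p$, combined with the equivalent form of the polynomial domination inequality from Definition \ref{definition7}, yields $|\Psi_{P}(u)| \le d_{w,(q,r)}^{m+}(P)\, \|(\lambda_{i})\|_{p^{*}} \sup_{\phi}(\sum_{i} \phi(|x_{i}|)^{q})^{1/q} \|(y_{i}^{*})\|_{r,|w|}$. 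Taking infimum over all representations of $u$ gives $|\Psi_{P}(u)| \le d_{w,(q,r)}^{m+}(P)\, \lambda_{(q,r)}^{m+}(u)$, so $\Psi_{P}$ extends by continuity to $(\widehat{\otimes}_{s,|\pi|}^{m}E) \widehat{\otimes}_{\lambda_{(q,r)}^{m+}} F^{*}$ with $\|\Psi_{P}\| \le d_{w,(q,r)}^{m+}(P)$.

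For the converse, given $\Psi$ in the dual of the completed tensor product, I would associate a map $B(\Psi)\colon E \to F$ by the rule $\langle B(\Psi)(x), y^{*}\rangle = \Psi(x \otimes \cdots \otimes x \otimes y^{*})$, using the same identification convention as in the multilinear case. The $m$-homogeneity in $x$ is immediate from the diagonal tensor structure, and reading $\Psi$ as acting on the symmetric positive projective tensor product $\widehat{\otimes}_{s,|\pi|}^{m}E$ together with the correspondence $\mathcal{P}^{r}(^{m}E;F) \cong \mathcal{L}^{r}(\widehat{\otimes}_{s,|\pi|}^{m}E;F)$ realizes $B(\Psi)$ as a bounded $m$-homogeneous polynomial. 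For $(x_{i}) \subset E^{+}$ and $(y_{i}^{*}) \subset F^{*+}$, $\ell_{p}$--$\ell_{p^{*}}$ duality gives $(\sum_{i} |\langle B(\Psi)(x_{i}), y_{i}^{*}\rangle|^{p})^{1/p} = \sup_{\|(\lambda_{i})\|_{p^{*}} \le 1} |\Psi(\sum_{i} \lambda_{i} x_{i} \otimes \cdots \otimes x_{i} \otimes y_{i}^{*})|$, which by continuity of $\Psi$ and the definition of $\lambda_{(q,r)}^{m+}$ is at most $\|\Psi\| \sup_{\phi}(\sum_{i} \phi(x_{i})^{q})^{1/q} \|(y_{i}^{*})\|_{r,|w|}$ (using $|x_{i}| = x_{i}$). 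Hence $B(\Psi) \in \mathcal{P}_{w,(q,r)}^{m+}(^{m}E;F)$ with $d_{w,(q,r)}^{m+}(B(\Psi)) \le \|\Psi\|$.

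The main obstacle will be the polynomial nature of $B(\Psi)$: verifying that the formal expression really defines an $F$-valued $m$-homogeneous polynomial and that the correspondences $P \mapsto \Psi_{P}$ and $\Psi \mapsto B(\Psi)$ are mutually inverse. Evaluating on diagonal simple tensors $x \otimes \cdots \otimes x \otimes y^{*}$ shows they are inverse on a generating set, and by linearity and density this extends to the full completed tensor product. Compared with the multilinear case, the added subtlety is the high non-uniqueness of the representations of an element of $(\otimes_{s,|\pi|}^{m}E) \otimes F^{*}$ as a linear combination of diagonal simple tensors; this is managed cleanly because the bound we derive on $|\Psi_{P}(u)|$ depends only on the tensor norm $\lambda_{(q,r)}^{m+}(u)$ and not on the chosen representation. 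Combining the two bounds $\|\Psi_{P}\| \le d_{w,(q,r)}^{m+}(P)$ and $d_{w,(q,r)}^{m+}(B(\Psi)) \le \|\Psi\|$ completes the isometric identification.
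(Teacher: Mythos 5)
Your proposal is correct and follows essentially the same route as the paper: define $\Psi_{P}$ on diagonal tensors, use H\"older plus the polynomial domination inequality and the infimum over representations to get $\|\Psi_{P}\|\le d_{w,(q,r)}^{m+}(P)$, and conversely recover $B(\Psi)$ with $d_{w,(q,r)}^{m+}(B(\Psi))\le\|\Psi\|$ via $\ell_{p}$--$\ell_{p^{*}}$ duality and the definition of $\lambda_{(q,r)}^{m+}$. Your extra remarks on well-definedness and the mutual inverse correspondences go slightly beyond what the paper writes out, but the core argument is the same.
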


\begin{proof}
Let $P\in \mathcal{P}_{w,\left( q,r\right) }^{m+}\left( ^{m}E;F\right) .$ We
define a linear functional on $\left( \otimes _{s,\left\vert \pi \right\vert
}^{m}E\right) \otimes F^{\ast }$ by%
\begin{equation*}
\Psi _{P}\left( u\right) =\sum\limits_{i=1}^{n}\lambda _{i}\left\langle
P\left( x_{i}\right) ,y_{i}^{\ast }\right\rangle ,
\end{equation*}%
where $u=\sum\limits_{i=1}^{n}\lambda _{i}x_{i}\otimes \cdots\otimes
x_{i}\otimes y_{i}^{\ast }$. Then, by H\"{o}lder's inequality, we have%
\begin{equation*}
\left\vert \Psi _{P}\left( u\right) \right\vert =\left\vert
\sum\limits_{i=1}^{n}\lambda _{i}\left\langle P\left( x_{i}\right)
,y_{i}^{\ast }\right\rangle \right\vert \leq \left\Vert \left( \lambda
_{i}\right) \right\Vert _{\ell _{p^{\ast
}}^{n}}(\sum\limits_{i=1}^{n}\left\langle P\left( x_{i}\right) ,y_{i}^{\ast
}\right\rangle ^{p})^{\frac{1}{p}}.
\end{equation*}%
Since $P$ is positive weakly $\left( q,r\right) $-dominated, we get%
\begin{equation*}
\left\vert \Psi _{P}\left( u\right) \right\vert \leq d_{w,\left( q,r\right)
}^{m+}(P)\left\Vert \left( \lambda _{i}\right) \right\Vert _{\ell _{p^{\ast
}}^{n}}\sup_{\phi \in B_{\mathcal{P}^{r}\left( ^{m}E\right) }^{+}}\left\Vert
\left( \phi \left( x_{i}\right) \right) \right\Vert _{\ell
_{q}^{n}}\left\Vert \left( y_{i}^{\ast }\right) \right\Vert _{\ell
_{r,\left\vert w\right\vert }^{n}\left( F^{\ast }\right) }.
\end{equation*}%
Hence, as $u$ is arbitrary, $\Psi _{P}$ is $\lambda _{\left( q,r\right)
}^{m+}$-continuous on $\left( \otimes _{s,\left\vert \pi \right\vert
}^{m}E\right) \otimes F^{\ast },$ and extends continuously to the completed
tensor product $\left( \widehat{\otimes }_{s,\left\vert \pi \right\vert
}^{m}E\right) \widehat{\otimes }_{\lambda _{\left( q,r\right) }^{m+}}F^{\ast
}$\ with 
\begin{equation*}
\left\Vert \Psi _{P}\right\Vert \leq d_{\left( q,r\right) }^{m+}(P).
\end{equation*}%
Conversely, let $\Psi \in (\left( \widehat{\otimes }_{s,\left\vert \pi
\right\vert }^{m}E\right) \widehat{\otimes }_{\lambda _{\left( q;r\right)
}^{m+}}F^{\ast })^{\ast }$. We consider the mapping $B(\Psi )$ defined by%
\begin{equation*}
B(\Psi )\left( x\right) \left( y^{\ast }\right) =\Psi \left( x\otimes 
\overset{\left( m\right) }{\cdots }\otimes x\otimes y^{\ast }\right) 
\end{equation*}%
It is clear that $B(\Psi )\in \mathcal{P}\left( ^{m}E;F\right) .$ Let $%
x_{1},\ldots,x_{n}\in E,$\ \ and $y_{1}^{\ast },\ldots,y_{n}^{\ast }\in F^{\ast }$%
, we have%
\begin{eqnarray*}
\left\vert \sum\limits_{i=1}^{n}\left\langle B(\Psi )\left( x_{i}\right)
,y_{i}^{\ast }\right\rangle \right\vert  &=&\left\vert \left\langle \Psi
(\sum\limits_{i=1}^{n}x_{i}\otimes \overset{\left( m\right) }{\cdots }%
\otimes x_{i}\otimes y_{i}^{\ast }\right\rangle \right\vert  \\
&\leq &\left\Vert \Psi \right\Vert \lambda _{\left( q,r\right)
}^{m+}(\sum\limits_{i=1}^{n}x_{i}\otimes \overset{\left( m\right) }{\cdots }%
\otimes x_{i}\otimes y_{i}^{\ast }).
\end{eqnarray*}%
Therefore,%
\begin{eqnarray*}
&&(\sum\limits_{i=1}^{n}\left\vert \left\langle B(\Psi )\left( x_{i}\right)
,y_{i}^{\ast }\right\rangle \right\vert ^{p})^{\frac{1}{p}}=\sup_{\left\Vert
\left( \lambda _{i}\right) \right\Vert _{\ell _{p^{\ast }}^{n}}\leq
1}(\left\vert \sum\limits_{i=1}^{n}\lambda _{i}\left\langle B(\Psi )\left(
x_{i}\right) ,y_{i}^{\ast }\right\rangle \right\vert ) \\
&=&\sup_{\left\Vert \left( \lambda _{i}\right) \right\Vert _{\ell _{p^{\ast
}}^{n}}\leq 1}(\left\vert \Psi (\sum\limits_{i=1}^{n}\lambda
_{i}x_{i}\otimes \overset{\left( m\right) }{\cdots }\otimes x_{i}\otimes
y_{i}^{\ast })\right\vert ) \\
&\leq &\sup_{\left\Vert \left( \lambda _{i}\right) \right\Vert _{\ell
_{p^{\ast }}^{n}}\leq 1}\left\Vert \Psi \right\Vert \left\Vert \left(
\lambda _{i}\right) \right\Vert _{\ell _{p^{\ast }}^{n}}\sup_{\phi \in B_{%
\mathcal{P}^{r}\left( ^{m}E\right) }^{+}}\left\Vert \left( \phi \left(
x_{i}\right) \right) \right\Vert _{\ell _{q}^{n}}\left\Vert \left(
y_{i}^{\ast }\right) \right\Vert _{\ell _{r,\left\vert w\right\vert
}^{n}\left( F^{\ast }\right) } \\
&\leq &\left\Vert \Psi \right\Vert \sup_{\phi \in B_{\mathcal{P}^{r}\left(
^{m}E\right) }^{+}}\left\Vert \left( \phi \left( x_{i}\right) \right)
\right\Vert _{\ell _{q}^{n}}\left\Vert \left( y_{i}^{\ast }\right)
\right\Vert _{\ell _{r,\left\vert w\right\vert }^{n}\left( F^{\ast }\right)
}.
\end{eqnarray*}%
Hence, $B(\Psi )$ is positive weakly $\left( q,r\right) $-dominted and 
\begin{equation*}
d_{w,\left( q,r\right) }^{m+}\left( B(\Psi )\right) \leq \left\Vert \Psi
\right\Vert .
\end{equation*}
\end{proof}

\end{document}